\begin{document}
\newtheorem{thm}{Theorem}[section]
\newtheorem{prop}[thm]{Proposition}
\newtheorem{claim}[thm]{Claim}
\newtheorem{corollary}[thm]{Corollary}
\newtheorem{lemma}[thm]{Lemma}
\newtheorem{porism}[thm]{Porism}

\theoremstyle{definition}  
\newtheorem{definition}[thm]{Definition}
\newtheorem{example}[thm]{Example}
\newtheorem{examples}[thm]{Examples}
\newtheorem{exercise}[thm]{Exercise}
\newtheorem{note}[thm]{Note}
\newtheorem{remark}[thm]{Remark}
\newtheorem{notation}[thm]{Notation}
\newtheorem{question}[thm]{Question}
\newtheorem{observation}[thm]{Observation}
\newtheorem{fact}[thm]{Fact}
\newtheorem*{vlth}{van Lambalgen's Theorem}

\newtheorem*{construct}{Construction}
\newtheorem*{verify}{Verification}

\newcommand{\sgsp}{\renewcommand{\baselinestretch}{1}\tiny\normalsize}
\newcommand{\sqsp}{\renewcommand{\baselinestretch}{1.5}\tiny\normalsize}
\newcommand{\dbsp}{\renewcommand{\baselinestretch}{2}\tiny\normalsize}

\raggedbottom
\tolerance=3000
\hbadness=10000
\hfuzz=1.5pt
\setcounter{tocdepth}{1}
\setcounter{secnumdepth}{2}

\newcommand{\margnote}[1]{\mbox{}\marginpar{\hspace{0pt}#1}}

\newcommand{\e}{ \{e\} }
\newcommand{\reals}{2^\N}

\newcommand{\strings}{2^{<\N}}
\newcommand{\nat}{\in\N}
\newcommand{\power}{\mathcal{P}(\N)}
\newcommand{\restr}{\!\!\restriction\!\!}
\newcommand{\st}{\;|\;}
\newcommand{\conv}{\!\!\downarrow}
\newcommand{\dvrg}{\!\!\uparrow}
\newcommand{\diff}{\!\setminus\!}
\newcommand{\inv}{^{-1}}
\newcommand{\str}[1]{\langle #1 \rangle}

\newcommand{\M}{\mathcal{M}}
\newcommand{\cR}{\mathcal{R}}
\newcommand{\B}{\mathcal{B}}
\newcommand{\F}{\mathbb{F}}
\newcommand{\Op}{\mathcal{O}}
\newcommand{\Fnc}{\mathcal{F}}
\newcommand{\Consts}{\mathcal{C}}
\newcommand{\LB}{\mathcal{S}_{\operatorname{Banach}}}

\newcommand{\concat}{^\smallfrown}

\newcommand{\eqT}{\equiv_T}
\newcommand{\eqwtt}{\equiv_{wtt}}
\newcommand{\eqtt}{\equiv_{tt}}

\newcommand{\leqT}{\leq_T}
\newcommand{\leqwtt}{\leq_{wtt}}
\newcommand{\leqtt}{\leq_{tt}}

\newcommand{\classR}{{\sf R}}
\newcommand{\classS}{{\sf S}}
\newcommand{\classC}{\mathcal{C}}
\newcommand{\classD}{\mathcal{D}}

\newcommand{\K}{$\mathcal{K}$}

\newcommand{\re}{c.e.\ }
\newcommand{\nre}{$n$-change}
\newcommand{\dre}{d.r.e.\ }
\newcommand{\Nre}{$\N$-change}
\newcommand{\fre}{$f$-change}
\newcommand{\gre}{$g$-change}

\newcommand{\field}[1]  {\mathbb{#1}} 
\newcommand{\R}         {\field{R}}
\newcommand{\N}         {\field{N}}
\newcommand{\Z}         {\field{Z}}
\newcommand{\C}         {\field{C}}
\newcommand{\Q}         {\field{Q}}
\newcommand{\LL}        {\field{L}}
\newcommand{\D}{\mathbb{D}}
\newcommand{\XTC}{\hat{\mathbb{C}}}
\newcommand{\cat}{^\frown}

\newcommand{\dom}{\operatorname{dom}}
\newcommand{\ran}{\operatorname{ran}}
\newcommand{\diam}{\operatorname{diam}}
\newcommand{\normal}{n}
\newcommand{\Log}{\operatorname{Log}}
\newcommand{\Arg}{\operatorname{Arg}}
\renewcommand{\Re}{\operatorname{Re}}
\renewcommand{\Im}{\operatorname{Im}}
\newcommand{\Int}{\operatorname{Int}}
\newcommand{\Ext}{\operatorname{Ext}}
\newcommand{\ray}[1]{\overrightarrow{#1}}
\newcommand{\QED}{\box}
\newcommand{\const}{\mbox{\emph{Const.}} }
\newcommand{\norm}[1]{\| #1 \|}
\newcommand{\supp}{\operatorname{supp}}
\newcommand{\Ind}{\mathbf{1}}
\newcommand{\mathquote}[1]{\mbox{`}#1\mbox{'}}

\newcommand{\stra}{\mathcal{A}}
\newcommand{\strb}{\mathcal{B}}
\newcommand{\strc}{\mathcal{C}}
\newcommand{\classk}{\mathcal{K}}

\newcommand{\zerovec}{\mathbf{0}}

\renewcommand{\d}{\mathbf{d}}

\numberwithin{equation}{section}

\title{Degrees of and lowness for isometric isomorphism}

\author[Franklin]{Johanna N.Y.\ Franklin}
\address[Franklin]{Department of Mathematics \\ Room 306, Roosevelt Hall \\ Hofstra University \\ Hempstead, NY 11549-0114 \\ USA}
\email{johanna.n.franklin@hofstra.edu}
\urladdr{http://people.hofstra.edu/Johanna\_N\_Franklin/}
\thanks{The first author was supported in part by Simons Foundation Collaboration Grant \#420806.}

\author[McNicholl]{Timothy H.\ McNicholl}
\address[McNicholl]{Department of Mathematics\\ Iowa State University\\ Ames, Iowa 50011}
\email{mcnichol@iastate.edu}
\thanks{The second author was supported in part by Simons Foundation Collaboration Grant \#317870.}

\begin{abstract}
We contribute to the program of extending computable structure theory to the realm of metric structures by investigating lowness for isometric isomorphism of metric structures.  We show that lowness for isomorphism coincides with lowness for isometric isomorphism and with lowness for isometry of metric spaces.  We also examine certain restricted notions of lowness for isometric isomorphism with respect to fixed computable presentations, and, in this vein, we obtain classifications of the degrees that are low for isometric isomorphism with respect to the standard copies of certain Lebesgue spaces.
\end{abstract}
\maketitle

\section{Introduction}\label{sec:intro}

While lowness---the idea that an oracle is useless in a particular context---has appeared in several contexts in computability theory over the past 50 years, it only made its way into computable structure theory in the past few years with Franklin and Solomon's results on lowness for isomorphism \cite{fs-lowim}. Franklin and Solomon defined a degree $\mathbf{d}$ to be low for isomorphism if, whenever there is a $\mathbf{d}$-computable isomorphism between two computably presented structures $\stra$ and $\strb$, there is already a computable isomorphism between $\stra$ and $\strb$ and thus the information contained in $\mathbf{d}$ is unnecessary in this context. This is clearly a degree-theoretic property, and the class of Turing degrees with this property has proven difficult to characterize. 

However, one may also define lowness for isomorphism for a class of structures $\classk$: whenever there is a $\mathbf{d}$-computable isomorphism between two computably presented structures $\stra$ and $\strb$ in a given class $\classk$, there is a computable isomorphism between $\stra$ and $\strb$. In Suggs's thesis, he considered classes of various types of equivalence structures, linear orders, and shuffle sums and was frequently able to achieve a full characterization of lowness for isomorphism for these particular classes \cite{suggs}.

These results are all formulated for classes of countable algebraic structures.  
Here, we turn our attention towards analysis and focus on metric structures.  Roughly speaking, these structures consist of a complete metric space together with collections of operations, functionals, and constants.  Examples are Banach spaces, Hilbert spaces, probability spaces, and $C^*$ algebras.  
The model theory of these structures has been investigated extensively via continuous logic (see \cite{Ben-Yaacov.Berenstein.Henson.Usvyatsov.2008}).   

Recently,  a program to adapt the framework of computable structure theory to the continuous setting, that is, to metric structures, has emerged (see \cite{Melnikov.2013}, \cite{Melnikov.Ng.2014}, \cite{Melnikov.Nies.2013}, \cite{Nies.Solecki.2015}, \cite{McNicholl.2017}, \cite{Clanin.McNicholl.Stull.2019}, \cite{Brown.McNicholl.2019}).   We contribute to this direction by introducing the study of lowness for isometric isomorphism of metric structures.  
We begin by considering metric structures in general in Section \ref{sec:metric.gen}.  We find, perhaps not surprisingly, that lowness for isomorphism (of countable algebraic structures) and lowness for isometric isomorphism coincide in Section \ref{sec:metric}.  

We then follow a direction parallel to that pursued by Suggs and 
consider some specific classes of metric structures.  We first find that lowness for isomorphism and lowness for isometry of metric spaces coincide.  We then proceed to examine Banach spaces, a class of metric structures that has enjoyed a long history of investigation in analysis as well as many interactions with mathematical logic.  We find that every degree of isomorphism is a degree of isometric isomorphism for Banach spaces.  We find that proving the converse of this statement, or more generally obtaining a classification of these degrees, appears to be a difficult task.  We discuss some of these difficulties in Section \ref{sec:banach}.  

As a possible first step towards obtaining a classification of the degrees of isometric isomorphism of Banach spaces, 
we then proceed to narrow our focus even further and consider the Lebesgue spaces.  Our motivation for doing so is that 
these spaces, in particular the sequence spaces $\ell^p$, are often used in the constructions of examples of Banach spaces.  
In addition, $\ell^1$ is universal among separable spaces in that every separable Banach space is a quotient of $\ell^1$.  
We succeed in classifying the degrees 
that are low for isometric isomorphism for the standard presentations of these spaces, that is, the degrees that are useless for computing an isometric isomorphism of the standard presentation onto some other presentation.

Finally, in Section \ref{sec:conclusion}, we state several questions and conjectures that naturally arise from these results. 

We begin with some preliminaries regarding metric structures and their presentations and, then, the fundamental definitions of lowness for (and degrees of) isometric isomorphism.

\section{Background and preliminaries}\label{sec:back.prelim}

\subsection{Metric structures and their presentations}\label{sec:back.prelim::subsec:metric.struct}

We begin by more formally defining the concept of a metric structure and the associated concepts of metric signature and interpretation.  Our definitions are essentially the same as that found in standard sources such as \cite{Ben-Yaacov.Berenstein.Henson.Usvyatsov.2008}.  
The main difference is that we do not require our metric structures to be bounded.  
A more minor difference is that we replace predicates with the somewhat broader class of functionals. 

Let $\F$ denote the field of scalars.  This can be either $\R$ or $\C$.

\begin{definition}\label{def:metric.str}
A \emph{metric structure} is a quintuple $\M = (U,d, \Op, \Fnc, \mathcal{C})$ with the following properties.
\begin{enumerate}
	\item $(U,d)$ is a complete metric space.

	\item For each $T \in \Op$, there is a positive integer $n$ so that $T$ is a uniformly continuous $n$-ary operation on $U$.
	
	\item For each $f \in \Fnc$, there is a positive integer $n$ so that $f$ is a uniformly continuous $n$-ary functional on $U$; i.e., $f : U^n \rightarrow \F$ and is uniformly continuous. 
	
	\item $\mathcal{C} \subseteq U$.
\end{enumerate}
\end{definition}

We remark that every countable algebraic structure can be represented by a metric structure by employing the discrete metric and regarding the characteristic functions of the relations as functionals.

If $\M$ is a metric structure, let $|\M|$ denote the set of all points of $\M$ (i.e., the \emph{universe} of $\M$).

\begin{definition}\label{def:metric.sig}
We define a \emph{metric signature} to be a quintuple 
$(\overline{\Op}, \overline{\Fnc}, \overline{\Consts}, \eta, \Delta)$ where
\begin{enumerate}
	\item $\overline{\Op}$, $\overline{\Fnc}$, $\overline{\Consts}$ are pairwise disjoint sets of symbols,
	
	\item $\eta : \overline{\Op} \cup \overline{\Fnc} \cup \overline{\Consts} \rightarrow \N$, $\eta$ is positive on $\overline{\Op} \cup \overline{\Fnc}$, and $\eta(\overline{c}) = 0$ for each $\overline{c} \in \overline{\Consts}$, and 
	
	\item $\Delta : (\overline{\Op} \cup \overline{\Fnc}) \times \N \rightarrow \N$.
\end{enumerate}
\end{definition}

Suppose $\mathcal{S} = (\overline{\Op}, \overline{\Fnc}, \overline{\Consts}, \eta, \Delta)$ is a metric signature.  
We refer to the symbols in $\overline{\Op}$, $\overline{\Fnc}$, $\overline{\Consts}$ as the 
\emph{operation symbols}, \emph{functional symbols}, and \emph{constant symbols} of $\mathcal{S}$ respectively.  
We call $\eta(\gamma)$ the \emph{arity} of $\gamma$.  
The function $\Delta_\phi(n) = \Delta(\phi,n)$ is the \emph{modulus} of $\phi$.

Suppose $(X_1, d_1)$ and $(X_2, d_2)$ are metric spaces and $f : X_1^n \rightarrow X_2$.  
Recall that a function $g : \N^n \rightarrow \N$ is a \emph{modulus of continuity} for $f$ if 
$d(f(p_1, \ldots, p_n), f(q_1, \ldots, q_n)) < 2^{-k}$ whenever 
$\max_j d(p_j, q_j) \leq 2^{-g(k)}$.

We can say now what it means for a metric structure to interpret a signature $\mathcal{S}$.

\begin{definition}
A metric structure $\M$ is an \emph{interpretation} of $\mathcal{S}$ if there is a map $\mathcal{I}$ that satisfies the following conditions. 
\begin{enumerate}
	\item For each $n$-ary operation symbol $\phi$ of $\mathcal{S}$, $\mathcal{I}(\phi)$ is an $n$-ary 
	operation of $\M$.  
	
	\item For each $n$-ary functional symbol $\phi$ of $\mathcal{S}$, $\mathcal{I}(\phi)$ is an $n$-ary functional of $\M$.  
	
	\item For each constant symbol $\kappa$ of $\mathcal{S}$, $\mathcal{I}(\kappa)$ is a point of $\M$.
	
	\item $\Delta_\phi$ is a modulus of continuity for $\phi$.  
\end{enumerate}
We denote $\mathcal{I}(\phi)$ by $\phi^\M$.
\end{definition}

Conversely, if $\M$ is an interpretation of $\mathcal{S}$, then we say $\mathcal{S}$ is a signature of $\M$. Let $\mathcal{K}_{\mathcal{S}}$ denote the class of all interpretations of $\mathcal{S}$ (the class of \emph{$\mathcal{S}$-structures}).  

\begin{definition}
Suppose $\M_0$ and $\M_1$ are interpretations of a metric signature $\mathcal{S}$, and let $F : |\M_0| \rightarrow |\M_1|$. 
We say $F$ is an \emph{isomorphism} if it is homeomorphic and satisfies the following.  
\begin{enumerate}
	\item For each $n$-ary operation symbol $\overline{T}$ of $\mathcal{S}$ and all $p_1, \ldots, p_n \in |\M_0|$,  
	$F(\overline{T}^{\M_0}(p_1, \ldots, p_n)) = \overline{T}^{\M_1}(F(p_1), \ldots, F(p_n))$.
	
	\item For each $n$-ary functional symbol $\overline{\phi}$ of $\mathcal{S}$ and all $p_1, \ldots, p_n \in |\M_0|$, 
	$F(\overline{\phi}^{\M_0}(p_1, \ldots, p_n)) = \overline{\phi}^{\M_1}(F(p_1), \ldots, F(p_n))$.  
	
	\item For each constant symbol $\overline{c}$ of $\mathcal{S}$, 
	$F(\overline{c}^{\M_0}) = \overline{c}^{\M_1}$.
\end{enumerate}
\end{definition}

A map $\Phi : |\M_0| \rightarrow |\M_1|$ is \emph{isometric} (or an \emph{isometry}) if 
it preserves distances.  We are primarily interested in isometric isomorphisms as they preserve both the metric and the algebraic structure.\footnote{In the literature on the model theory of metric structures, the term ``isomorphism" is used for maps that preserve the metric and algebraic structures.  In keeping with the terminology of functional analysis, we prefer to use the term ``isomorphism" for maps that preserve the topological and algebraic structures.}

We turn to presentations of metric structures which we will use to define computability on these structures.  Our approach is an adaptation of an idea that goes back to Pour-El and Richards \cite{Pour-El.Richards.1989}.  We will need the following.  

\begin{definition}\label{def:generated}
Suppose $\M$ is a metric structure.
\begin{enumerate}
	\item If $S \subseteq |\M|$, then the \emph{subspace generated by $S$} is the smallest closed subset of $|\M|$
	that is closed under every operation of $\M$.
	
	\item A sequence $(p_n)_{n \in \N}$ \emph{generates} $\M$ if 
	$\{p_n\ :\ n \in \N\}$ generates $|\M|$.
\end{enumerate} 
\end{definition}

\begin{definition}\label{def:presentation}
Suppose $\M$ is a metric structure.  A \emph{presentation} of $\M$ is a pair $(\M, (p_n)_{n \in \N})$ such that 
$(p_n)_{n \in \N}$ generates $\M$.  If $\M^\# = (\M, (p_n)_{n \in \N})$ is a presentation of $\M$, we call $p_n$ the $n$-th \emph{distinguished point} of $\M$.
\end{definition}

Thus, a presentation of a metric structure is entirely defined by specifying the sequence of its distinguished points.  
Note that we do not require the distinguished points to be dense.  

\begin{definition}\label{def:rational.pts}
Suppose $\M^\#$ is a presentation of a metric structure $\M$.  
\begin{enumerate}
	\item The \emph{rational points} of $\M^\#$ are the points in the subspace generated by the distinguished points of $\M^\#$.
	
	\item A \emph{rational open ball} of $\M^\#$ is an open ball of $\M$ whose center is a rational point of $\M^\#$ and whose radius is a positive rational number.
\end{enumerate}
\end{definition}

We now turn to the computability of presentations.  This necessitates a brief discussion of the computability of metric signatures. 
Fix a metric signature $\mathcal{S} = (\overline{\mathcal{O}}, \overline{\mathcal{F}}, \overline{\mathcal{C}}, \eta,\Delta)$.  A \emph{presentation} of $\mathcal{S}$ is a pair $(\mathcal{S}, \nu)$ where $\nu$ 
maps $\N$ onto the symbols of $\mathcal{S}$.  A presentation $(\mathcal{S}, \nu)$ is \emph{computable} if it meets the following criteria.
\begin{enumerate}
	\item $\nu^{-1}[X]$ is computable for each $X \in \{\overline{\mathcal{O}}, \overline{\mathcal{F}}, \overline{\mathcal{C}}\}$.  
	
	\item $\eta \circ \nu$ is computable.
	
	\item $\Delta_{\nu(n)}$ is computable uniformly in $n \in \nu^{-1}[\overline{\mathcal{O}} \cup \overline{\mathcal{F}}]$.
\end{enumerate}
We observe that any two computable presentations of a metric signature are computably isomorphic.  That is, 
if $(\mathcal{S}, \nu)$ and $(\mathcal{S}, \nu')$ are computable presentations, then there is a computable permutation $\pi$ of $\N$ so that $\nu \circ \pi = \nu'$.  Thus, if a metric signature has a computable presentation, we identify that signature with any one of its computable presentations, and we simply call the signature \emph{computable}.  The key feature of such signatures is that if $\M^\#$ is a presentation of a metric structure that has a computable signature, then it is possible to effectively number its rational points and balls.  This numbering allows us to define 
computable points and maps as follows.

\begin{definition}\label{def:comp.pt}
Suppose $\M^\#$ is a presentation of a metric structure that has a computable signature, and let 
$d$ denote the metric of $\M$.  
A point $p$ of $\M^\#$ is a \emph{computable point of $\M^\#$} if there is an algorithm 
that, given any $k \in \N$, produces a rational point $p'$ of $\M^\#$ so that $d(p,p') < 2^{-k}$.
\end{definition}

\begin{definition}\label{def:comp.map}
Suppose $\M_0^\#$ and $\M_1^\#$ are presentations of metric structures with computable signatures, and let
$\Phi : |\M_0| \rightarrow |\M_1|$.  We say $\Phi$ is a \emph{computable map of $\M_0^\#$ into $\M_1^\#$} if there is an algorithm $P$ that satisfies the following two criteria.
\begin{itemize}
	\item Given a (code of a) rational ball $B_0$ of $\M_0^\#$, $P$ either does not 
	halt or produces a rational ball $B_1$ of $\M_1^\#$ so that $\Phi[B_0] \subseteq B_1$.  
	
	\item If $U$ is a neighborhood of $\Phi(p)$, then there is a rational ball $B_0$ of 
	$\M_0^\#$ so that $p \in B_0$ and $P(B_0) \subseteq U$.
\end{itemize}
\end{definition}

\begin{definition}\label{def:comp.pres}
Suppose $\M^\#$ is a presentation of a metric structure $\M$ that has a computable signature.  
We say $\M^\#$ is \emph{computable} if it satisfies the following conditions.  
\begin{enumerate}
	\item The metric of $\M$ is computable on the rational points of $\M^\#$.  
That is, if $d$ denotes the metric of $\M$, then there is an algorithm that, given any two rational points $p_1$, $p_2$ of 
$\M^\#$ and a $k \in \N$, computes a rational number $q$ so that $|q - d(p_1, p_2)| < 2^{-k}$. 

	\item For every $n$-ary functional $F$ of $\M^\#$ and all rational points $p_1, \ldots, p_n$ of $\M^\#$, 
	$F(p_1, \ldots, p_n)$ is computable uniformly in $F, p_1, \ldots, p_n$.  That is, there is an algorithm that, given 
	$F$,  $p_1, \ldots, p_n$ and $k \in \N$ as input, produces a rational number $q$ so that 
	$|F(p_1, \ldots, p_n) - q| < 2^{-k}$.
\end{enumerate}
\end{definition}

If $e,e'$ are indices of the algorithms referenced in Definition \ref{def:comp.pres}, then we refer to 
$\langle e,e' \rangle$ as an \emph{index} of $\M^\#$.

The criteria in the following theorem can often be used to reduce the computability of a function between presentations of metric spaces to its computability on the rational points.  This can be useful in demonstrating the computability of such a function in that the mystery of producing an algorithm that operates on neighborhoods can be sidestepped for the more familiar setting of computing on individual points. 

\begin{thm}\label{thm:comp.map.equiv}
Suppose $\M_0^\#$ and $\M_1^\#$ are presentations of metric structures with computable signatures, and let
$\Phi : |\M_0| \rightarrow |\M_1|$.  Then $\Phi$ is a computable map of $\M_0^\#$ into $\M_1^\#$ if both of the following hold. 
\begin{enumerate}
	\item $\Phi$ is computable on the rational points of $\M_0^\#$.  That is, for every rational point $p$ of $\M_0^\#$, 
	$\Phi(p)$ is a computable point of $\M_1^\#$ uniformly in $p$.
	
	\item There is a computable modulus of continuity for $\Phi$.
\end{enumerate}
\end{thm}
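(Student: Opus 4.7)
The plan is to construct the algorithm $P$ of Definition \ref{def:comp.map} directly from the two hypotheses. Let $g$ be the computable modulus of continuity for $\Phi$ supplied by (2), so that $d(x,y) \leq 2^{-g(k)}$ in $\M_0$ implies $d(\Phi(x),\Phi(y)) < 2^{-k}$ in $\M_1$. The algorithm $P$ proceeds as follows: given (a code of) a rational ball $B_0 = B(q,r)$ of $\M_0^\#$, dovetail the computation of $g(0), g(1), g(2), \ldots$ and wait for some $k$ with $r \leq 2^{-g(k)}$; if no such $k$ is ever found, $P$ diverges on $B_0$. Once the least such $k$ appears, invoke hypothesis (1) to produce a rational point $q'$ of $\M_1^\#$ with $d(\Phi(q), q') < 2^{-k}$, and output the rational ball $B_1 := B(q', 2^{-k+1})$.

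For the first clause of Definition \ref{def:comp.map}, any $y \in B_0$ satisfies $d(q,y) < r \leq 2^{-g(k)}$, so $d(\Phi(q), \Phi(y)) < 2^{-k}$ and hence $d(q', \Phi(y)) < 2^{-k+1}$; this gives $\Phi[B_0] \subseteq B_1$ as required.

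For the second clause, let $U$ be a neighborhood of $\Phi(p)$ in $\M_1$, pick $\epsilon > 0$ with $B(\Phi(p), \epsilon) \subseteq U$, and choose an integer $k$ with $2^{-k+2} < \epsilon$. Since the distinguished points $(p_n)_{n \in \N}$ generate $\M_0$ (in the sense of Definition \ref{def:generated}), the rational points of $\M_0^\#$ are dense in $|\M_0|$, so we may select a rational point $q$ with $d(p,q) < 2^{-g(k)-1}$. Setting $r := 2^{-g(k)}$, the ball $B_0 := B(q,r)$ is rational and contains $p$; the algorithm halts on $B_0$ at the chosen $k$, and two applications of the triangle inequality show that its output $B(q', 2^{-k+1})$ lies inside $B(\Phi(p), 2^{-k+2}) \subseteq U$.

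The argument is largely routine; the main thing to get right is the bookkeeping of three separate error terms, namely the approximation of $\Phi(q)$ by $q'$, the radius $2^{-k+1}$ of the output ball, and the amplification of $d(p,q)$ through $\Phi$, so that they combine to fit inside the prescribed neighborhood. The cushion $2^{-k+2} < \epsilon$ comfortably absorbs all three. A small side observation that must be made explicit is the density of the rational points in $|\M_0|$, which is what allows us to find a rational approximation $q$ to an arbitrary $p$ used in the second clause.
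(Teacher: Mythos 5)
Your overall plan is the right one (the paper itself states this theorem without proof, treating it as routine), but the step where your algorithm selects $k$ breaks the argument. You have $P$ halt at the \emph{least} $k$ with $r \leq 2^{-g(k)}$, which is the \emph{coarsest} output accuracy the modulus permits, not the finest, and your later assertion that ``the algorithm halts on $B_0$ at the chosen $k$'' is false in general. Concretely, take $\Phi$ to be the identity map on a presentation of a space of diameter greater than $2$, with the perfectly legitimate nondecreasing modulus $g(k) = k+1$. Then for every rational ball of radius $r \leq 1/2$, the least $k$ with $r \leq 2^{-g(k)}$ is $k = 0$, so on every small input ball your $P$ outputs a ball of radius $2^{-0+1} = 2$. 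The first clause of Definition \ref{def:comp.map} still holds, but the second fails outright: no input ball ever yields $P(B_0) \subseteq U$ once $U$ has diameter less than $2$. In your verification of the second clause you set $r = 2^{-g(k)}$ and claimed $P$ recovers your $k$; it instead recovers the least $k'$ with $g(k') \leq g(k)$, which for any nondecreasing modulus is $0$.

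The repair is standard and cheap. First replace $g$ by $\tilde{g}(k) = k + \max_{j \leq k} g(j)$, which is computable, strictly increasing, and still a modulus of continuity for $\Phi$, since $\tilde{g} \geq g$ pointwise and shrinking the input tolerance preserves the defining implication. Because $\tilde{g}(k) \geq k$, on input $B(q,r)$ the algorithm can effectively search for the \emph{largest} $k$ with $r \leq 2^{-\tilde{g}(k)}$ --- only the finitely many $k$ with $2^{-k} \geq r$ need be examined --- and diverge when no such $k$ exists. Now in the second clause choose $r := 2^{-\tilde{g}(k)}$; strict monotonicity guarantees $P$ halts at exactly your $k$, and the rest of your error bookkeeping ($2^{-k} + 2^{-k} + 2^{-k+1} < 2^{-k+2} < \epsilon$) goes through verbatim. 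Your remaining ingredients are sound: the appeal to density of the rational points is exactly what ``$(p_n)_{n \in \N}$ generates $\M_0$'' provides, your first-clause computation is correct as written, and the uniformity in hypothesis (1) is precisely what lets you compute $q'$ from $q$ and $k$.
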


Both Definition \ref{def:comp.pres} and Theorem \ref{thm:comp.map.equiv} relativize.  

A metric structure may have a presentation that is designated as standard.  In such a case, the structure and its standard presentation are identified.  Standard presentations are always computable.

\subsection{Lowness for and degrees of isomorphism and isometry}\label{sec:back.prelim::subsec:lowness}

Throughout this section, we assume $\mathcal{S}$ is a computable metric signature and that 
all metric structures considered are interpretations of $\mathcal{S}$.  If $\phi$ is either an operation or functional symbol of $\mathcal{S}$, then, as before, we let $\Delta_\phi$ denote the modulus of continuity assigned to $\phi$ by $\mathcal{S}$.

We now formally define lowness for isometric isomorphism.  Since we are considering this concept both as it pertains to a particular structure as well as in general, we will need to break the usual definition down into three sublevels: lowness for isometric isomorphism for a given computable presentation of a structure, lowness for isometric isomorphism for a given structure, and, finally, lowness for isometric isomorphism for a class of structures. 

\begin{definition}\label{def:low.ii}
Let $\mathbf{d}$ be a nonzero Turing degree.
\begin{enumerate}
	\item Suppose $\mathcal{M}^\#$ is a computable presentation of a metric structure $\M$.  We say that $\mathbf{d}$ is \emph{low for $\mathcal{M}^\#$ isometric isomorphism}
if every computable presentation of $\mathcal{M}$ that is $\mathbf{d}$-isometrically isomorphic to $\mathcal{M}^\#$ is also computably isometrically isomorphic to $\mathcal{M}^\#$. 

	\item Suppose $\mathcal{M}$ is a computably presentable metric structure.  We say $\mathbf{d}$ is \emph{low for $\mathcal{M}$ isometric isomorphism} if it is low for $\mathcal{M}^\#$ isometric isomorphism whenever 
	$\mathcal{M}^\#$ is a computable presentation of $\mathcal{M}$.
	
	\item Suppose $\mathcal{K}$ is a class of computably presentable metric structures.  We say $\mathbf{d}$ is \emph{low for isometric isomorphism of $\mathcal{K}$-structures} if $\mathbf{d}$ is low for $\mathcal{M}$ isometric isomorphism for every $\mathcal{M} \in \mathcal{K}$.  
	
	\item We say $\mathbf{d}$ is \emph{low for isometric isomorphism} if it is low for $\mathcal{M}$ isometric isomorphism for every computably presentable structure $\mathcal{M}$.  
\end{enumerate}
\end{definition}

We will also discuss degrees of isometric isomorphism, introduced by McNicholl and Stull in \cite{McNicholl.Stull.2019}. We present this concept here at two levels: first, at the level of a degree of isomorphism for a pair of computable presentations of a structure, and then at the level of the degree of isomorphism of a single computable presentation of a structure.

\begin{definition}\label{def:deg.ii}
Let $\mathcal{M}$ be a metric structure.
\begin{enumerate}
	\item Suppose $\mathcal{M}^\#$ and $\mathcal{M}^+$ are computable presentations of $\mathcal{M}$.  The \emph{degree of isometric isomorphism} of $(\mathcal{M}^\#, \mathcal{M}^+)$ is the least powerful Turing degree that computes an isometric isomorphism of $\mathcal{M}^\#$ onto $\mathcal{M}^+$. 
	
	\item If among all computable presentations of $\mathcal{M}$ one, say $\mathcal{M}^\#$, is designated as standard, and if $\mathcal{M}^+$ is any computable presentation of $\mathcal{M}$, then the \emph{degree of isometric isomorphism of $\mathcal{M}^+$} is the degree of isometric isomorphism of $(\mathcal{M}^\#, \mathcal{M}^+)$.  
\end{enumerate}
\end{definition}

Our interest in degrees of isometric isomorphism stems from the following observation.  Suppose $\mathcal{M}^\#$ has the property that the degree of isometric isomorphism for $(\mathcal{M}^\#, \mathcal{M}^+)$ is defined for every computable $\mathcal{M}^+$.  Then the degrees that are low for $\mathcal{M}^\#$ isometric isomorphism are precisely those that do not bound these degrees of isometric isomorphism.

As noted above, every countable algebraic structure can be represented as a metric structure.  Therefore, when applying the terminology  defined in this section to such structures, we omit ``isometry."   

\section{Lowness for isometric isomorphism}\label{sec:metric.gen}

Throughout this section, we assume $\mathcal{S}$ is a computable metric signature and that 
all structures considered are interpretations of $\mathcal{S}$.  The main result of this section is the following.

\begin{thm}\label{thm:low.ii}
A Turing degree is low for isomorphism if and only if it is low for isometric isomorphism.
\end{thm}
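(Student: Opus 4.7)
For the direction low-for-isometric-iso implies low-for-iso, I would exploit the discrete-metric embedding remarked on after Definition \ref{def:metric.str}. Every countable algebraic structure $\stra$ becomes a metric structure $\stra_d$ by endowing its universe with the discrete metric and reinterpreting each relation as its $\{0,1\}$-valued characteristic functional. Computable algebraic presentations become computable metric presentations under this construction, and every bijection is automatically an isometry for the discrete metric, so algebraic isomorphisms $\stra \to \strb$ coincide with isometric isomorphisms $\stra_d \to \strb_d$ in a fully degree-preserving way. Lowness for isometric isomorphism applied to the $\stra_d$ presentations therefore transfers to lowness for isomorphism of $\stra$.

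For the converse direction, the plan is to associate to each computable metric presentation $\M^\#$ a computable countable algebraic structure $\stra(\M^\#)$ whose isomorphism theory is equivalent, in a Turing-degree-preserving fashion, to the isometric isomorphism theory of $\M^\#$. My proposal is to let the universe of $\stra(\M^\#)$ be $\N$ (indexing the rational points of $\M^\#$, which are closed under operations), and to include in its signature: the operations of $\M$ restricted to the rational points; constants for each constant symbol of $\M$; a unary function naming the distinguished points $p_n$; and function symbols at each precision $k \in \N$ whose interpretations return rational approximations to the metric and to each functional of $\M$, obtained from the algorithms provided by $\M^\#$. Any algebraic isomorphism $\phi : \stra(\M_0^\#) \to \stra(\M_1^\#)$ preserves these approximations at every precision, and hence preserves the actual distances and functional values exactly; combined with preservation of operations and distinguished points, $\phi$ extends uniquely by uniform continuity to an isometric isomorphism of $\M_0$ onto $\M_1$ of the same Turing degree. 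Granting the converse direction, if $\mathbf{d}$ is low for isomorphism and there is a $\mathbf{d}$-computable isometric isomorphism $\M_0^\# \cong \M_1^\#$, then there is a $\mathbf{d}$-computable algebraic isomorphism of the associated structures, hence a computable one by lowness, hence a computable isometric isomorphism.

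The main obstacle is the converse just invoked: extracting a $\mathbf{d}$-computable algebraic isomorphism $\phi$ from a given $\mathbf{d}$-computable isometric isomorphism $F : \M_0 \to \M_1$. The difficulty is that $F$ need not send rational points of $\M_0^\#$ to rational points of $\M_1^\#$, since $F$ need not send distinguished points of $\M_0^\#$ to distinguished points of $\M_1^\#$; hence $\phi$ cannot simply be the restriction of $F$. I would address this by refining the universe of $\stra(\M^\#)$ so that its elements code fast-Cauchy sequences of rational points modulo equality of limits, allowing $F$ to act coordinatewise on such codes and thereby produce the required $\phi$. The delicate bookkeeping needed to keep $\stra(\M^\#)$ computable (uniformly in an index of $\M^\#$) while making this correspondence between isometric isomorphisms of presentations and algebraic isomorphisms of the associated structures fully degree-preserving is where I expect the real subtlety of the proof to lie.
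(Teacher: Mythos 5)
Your right-to-left direction (low for isometric isomorphism implies low for isomorphism) is exactly the paper's argument via the discrete metric, and it is fine. The genuine gap is in the other direction, and it sits precisely where you flag ``the real subtlety'': the coding of a presentation $\M^\#$ as a countable algebraic structure $\stra(\M^\#)$ does not have the degree-preserving correspondence you claim. An algebraic isomorphism must preserve your precision-$k$ approximation functions \emph{exactly}, and that is strictly stronger than preserving the metric and functionals: an isometric isomorphism $F$ guarantees $d_1(F(p),F(q)) = d_0(p,q)$, which only forces the two independently chosen rational approximants to agree within $2^{-k+1}$, not to be equal. So while every algebraic isomorphism of your coded structures induces an isometric isomorphism, an isometric isomorphism (even one carrying rational points to rational points) generally induces no algebraic isomorphism of the codes; indeed two isometrically isomorphic computable presentations can yield non-isomorphic coded structures under your scheme. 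Your proposed repair---replacing the universe by codes of fast-Cauchy sequences modulo equality of limits---makes the computability worse rather than better: the set of codes of fast-Cauchy sequences is only $\Pi^0_1$, and equality of limits is a $\Pi^0_1$ equivalence relation, so the quotient is not a computable structure, while the un-quotiented structure's isomorphisms are no longer the right relation. There is a real obstruction here: any decidable discretization of the metric fine enough to determine it exactly fails to be isometry-invariant, whereas the isometry-invariant relations such as $d(p,q) < r$ and $d(p,q) \leq r$ are respectively c.e.\ and co-c.e.\ but not decidable.

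The paper avoids interpreting metric structures into algebraic ones altogether. Lemma \ref{lm:pi.01} constructs, from computable presentations $\M^\#$ and $\M^+$, a $\Pi^0_1$ class $\cR \subseteq \N^\N$ whose members are pairs $(f,g)$ coding fast-Cauchy approximations (on rational points) to an isometric isomorphism and its inverse, with the isometry, inverse, operation-, functional-, and constant-preservation requirements all written as non-strict inequalities so that the class is closed; a degree $\d$ computes a member of $\cR$ if and only if it computes an isometric isomorphism of $\M^\#$ onto $\M^+$. The forward direction of the theorem then follows from Franklin and Turetsky's result \cite{ft-lowpaths} that the degrees low for isomorphism are exactly the degrees low for paths through $\Pi^0_1$ classes in Baire space: if $\d$ is low for isomorphism and computes an isometric isomorphism, it computes a point of $\cR$, hence $\cR$ has a computable point, hence there is a computable isometric isomorphism. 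To salvage your approach you would either need this $\Pi^0_1$-class detour or genuinely heavy effective-interpretability machinery; as sketched, the interpretation step fails.
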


We base the proof of Theorem \ref{thm:low.ii} on the following lemma which will be useful later as well.  
The lemma and its proof are adaptations of ideas from \cite{ft-lowpaths}.

\begin{lemma}\label{lm:pi.01}
Let $\M^\#$ and $\M^+$ be computable presentations of a metric structure with signature $\mathcal{S}$.  
Then there is a $\Pi_1^0$ class $\cR \subseteq \N^\N$ so that
for every Turing degree $\d$, $\d$ computes a point in $\cR$ if and only if 
$\d$ computes an isometric isomorphism of $\M^\#$ onto $\M^+$.
\end{lemma}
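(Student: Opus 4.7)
The plan is to code an isometric isomorphism $F : |\M^\#| \to |\M^+|$ as a function $g \in \N^\N$ packaging two streams of data: for each rational point $r_n$ of $\M^\#$ and each precision $k$, $g$ supplies a rational point $q_{n,k}$ of $\M^+$ intended to approximate $F(r_n)$ to within $2^{-k}$; and for each rational point $r'_m$ of $\M^+$ and each $\ell \in \N$, $g$ supplies an index $h(m,\ell)$ of a rational point of $\M^\#$ intended to satisfy $d(F(r_{h(m,\ell)}), r'_m) < 2^{-\ell}$. I will take $\cR$ to be the set of all $g \in \N^\N$ meeting a uniform list of $\Pi_1^0$ constraints saying these data really describe an isometric isomorphism.

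The constraints are the expected ones. I ask that the sequences $(q_{n,k})_k$ be uniformly Cauchy, $d(q_{n,k}, q_{n,k'}) \leq 2^{-k}+2^{-k'}$ for all $n,k,k'$; that $|d(q_{n,k}, q_{m,k}) - d(r_n, r_m)| \leq 2^{1-k}$ for all $n,m,k$, so that $F$ is distance-preserving in the limit; that for each operation or functional symbol $\phi$ of $\mathcal{S}$ the approximations commute with $\phi^{\M^+}$ to within an error that vanishes in $k$, where the modulus $\Delta_\phi$ supplied by the signature tells me which precision to use on the input side; that $d(q_{n_c, k}, c^{\M^+}) \leq 2^{-k}$ for each constant symbol $c$, with $r_{n_c} = c^{\M^\#}$; and finally that each surjectivity witness is honest, $d(q_{h(m,\ell),\ell+1}, r'_m) \leq 2^{-\ell} + 2^{-\ell-1}$ for all $m,\ell$. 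Since both presentations are computable and $\mathcal{S}$ is a computable signature, every clause is a computable conjunction over $\N$ of rational comparisons, so $\cR$ is $\Pi_1^0$.

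Verification is then routine. Given a $\d$-computable isometric isomorphism $\Phi$ of $\M^\#$ onto $\M^+$, I $\d$-computably choose $q_{n,k}$ to be any rational point of $\M^+$ within $2^{-k}$ of $\Phi(r_n)$, and I $\d$-computably search for each $h(m,\ell)$---such a witness exists because $\Phi$ is surjective and distance-preserving and the rational points of $\M^\#$ are dense. Conversely, if $g \in \cR$ is $\d$-computable, the Cauchy clause defines $\hat F(r_n) := \lim_k q_{n,k}$; the isometry clause says $\hat F$ is distance-preserving on rational points, so $\hat F$ extends uniquely to an isometric embedding $|\M^\#| \to |\M^+|$, which is produced $\d$-computably via Theorem \ref{thm:comp.map.equiv}; the symbol-preservation clauses transfer from rational points to all of $|\M^\#|$ by continuity; and the witness clause makes the image dense in $|\M^+|$, which forces surjectivity because $\hat F(|\M^\#|)$ is closed (being the isometric image of the complete space $(|\M^\#|,d)$).

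The main obstacle is building surjectivity into a $\Pi_1^0$ specification, since ``every target point is approximated by some $F(r_n)$'' is naturally $\Pi_1^0(\Sigma_1^0)$. The remedy, which I expect to be the one genuinely nonobvious move, is to Skolemize by embedding the witnesses $h(m,\ell)$ directly into $g$; this costs nothing in the forward direction, because an oracle for $\Phi$ can also effectively locate such witnesses by search. Once this is handled, the remaining work is careful bookkeeping with the moduli of continuity supplied by $\mathcal{S}$.
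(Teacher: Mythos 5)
Your proposal is correct, and its skeleton is the paper's: cut out a $\Pi_1^0$ class of rational-approximation data for the isomorphism, with every clause a non-strict inequality between a rational and a real that is computable uniformly from the data (distances between rational points are uniformly computable since both presentations are computable), and recover the map in the limit via Theorem \ref{thm:comp.map.equiv}. The genuine divergence is in how bijectivity is secured. The paper codes a \emph{pair} $(f,g)$, where $f$ approximates $\Phi$ on the rational points of $\M^\#$ and $g$ approximates the inverse $\Psi$ on the rational points of $\M^+$, and imposes a two-sided composition clause (its condition (3)) forcing $\Psi\Phi$ and $\Phi\Psi$ to be the identity in the limit; surjectivity is then automatic, and a point of $\cR$ hands $\d$ the inverse as well (in the forward direction the paper must, and does, note that $\d$ computes $\Psi$ from $\Phi$). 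You instead code only the forward map plus the Skolemized density witnesses $h(m,\ell)$ and get surjectivity topologically: the honesty clause puts each rational point $r'_m$ of $\M^+$ within $2^{-\ell+1}$ of the image, which is closed because an isometric image of the complete space $|\M^\#|$ is complete. Both routes work; yours is slightly leaner (one honesty clause instead of Cauchy and two-sided clauses for $g$), while the paper's delivers the inverse explicitly---though that is recoverable from your data, since isometry gives $d(r_{h(m,\ell)}, \hat F^{-1}(r'_m)) \leq 2^{-\ell+1}$, so your $h$ carries the same information as the paper's $g$. One small repair: you assume each constant $c^{\M^\#}$ is itself a rational point $r_{n_c}$, which the definitions do not guarantee (distinguished points need only generate the structure, and constants are not $0$-ary operations here); as in the paper, the constant clause should instead be phrased via computable sequences of rational points approximating $c^{\M^\#}$ and $c^{\M^+}$ (the maps $\zeta_c$, $\zeta'_c$), which also makes that clause manifestly $\Pi_1^0$, since $d(q_{n_c,k}, c^{\M^+})$ is not literally a comparison between rational points. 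Your construction is, like the paper's, uniform in indices of $\M^\#$ and $\M^+$, which is what the remark following the lemma requires.
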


\begin{proof}
 Let $x_j$ denote the $j$-th rational point of $\M^\#$, and let $y_j$ denote the $j$-th rational point of $\M^+$.  
 For every $n$-ary operation $T$ of $\M$, fix computable maps $\zeta_T$ and $\zeta_T'$ from $\N^n$ into $\N$ so that for all $j_1, \ldots, j_n \in \N$, $x_{\zeta_T(j_1, \ldots, j_n)} = T(x_{j_1}, \ldots, x_{j_n})$ and $T(y_{j_1}, \ldots, y_{j_n}) = y_{\zeta'_T(j_1, \ldots, j_n)}$.  Furthermore, we can choose these maps so that they are computable uniformly in $T$.
For each constant $c$ of $\M$, fix computable maps $\zeta_c$ and $\zeta_c'$ so that for all 
$c,j$,
\[
\max\{d(x_{\zeta_c(j)}, c), d(y_{\zeta'_c(j)}, c)\} < 2^{-j}.
\]
Again, we can choose these maps so that they are computable uniformly in $c$.

We define $\cR$ to be the set of all $(f,g) \in (\N^{\N \times \N})^2$, recoded as elements of $\N^\N$, that satisfy the following conditions. 
\begin{enumerate}
	\item $\max\{d(y_{f(m,n)}, y_{f(m,n+1)}), d(x_{g(m,n)}, x_{g(m,n+1)})\} \leq 2^{-(n+1)}$.  \label{itm:def.R.Cauchy.cms}
	
	\item $|d(x_m, x_{m'}) - d(y_{f(m,n)}, y_{f(m',n')})| \leq 2^{-n} + 2^{-n'}$.
\label{itm:def.R.isom.cms}
	
	\item For all $m,n,n' \in \N$, 
	\[
	\max\{d(x_m, x_{g(f(m,n), n')}), d(y_m, y_{f(g(m,n), n')})\} \leq 2^{-n} + 2^{-n'}.
	\]
	\label{itm:def.R.inv.cms}
	
	\item For every $n$-ary operation $T$ of $\M$ and all $m,k,j_1, \ldots, j_n, k_1, \ldots, k_n \in \N$, 
	\[
	d(y_{f(\zeta_T(j_1, \ldots, j_n), k)}, y_{\zeta'_T(f(j_1,k_1), \ldots, f(j_n, k_n))}) \leq 2^{-(k+1)} + 2^{-m}
	\]
provided $\Delta_T(m) \leq \min_s k_s + 1$ for $s$ between $1$ and $n$.	
		\label{itm:def.R.OpsPres}
	
	\item For every $n$-ary functional $F$ of $\M$ and all $m, j_1, \ldots, j_n, k_1, \ldots, k_n \in \N$, 
	\[
	|F(x_{j_1}, \ldots, x_{j_n}) - F(y_{f(j_1, k_1)}, \ldots, y_{f(j_n, k_n)})| \leq 2^{-m}
	\]
	provided $\Delta_F(m) \leq \min_s k_s + 1$  for $s$ between $1$ and $n$.
	\label{itm:def.R.Func.Pres}
	
	\item For each constant $c$ of $\M$ and all $j, k \in \N$, 
	\[
	d(y_{\zeta'_c(j)}, y_{f(\zeta_c(j), k)}) \leq 2^{-j+1} + 2^{-(k+1)}.
	\]
	\label{itm:def.R.Const.Pres}
\end{enumerate}
Thus, $\cR$ is $\Pi_1^0$.  \\

We first show that if $\d$ computes an isometric isomorphism $\Phi$ of $\M^\#$ onto $\M^+$, then 
$\d$ computes an $(f,g) \in \cR$.   Let 
$\Psi = \Phi^{-1}$; it follows that $\d$ computes $\Psi$ as well.
It also follows that $\d$ computes $f,g \in \N^{\N \times \N}$ so that for each 
$m \in \N$, $(x_{f(m,n)})_{n \in \N}$ and $(y_{g(m,n)})_{n \in \N}$ are strongly Cauchy sequences that converge to $\Phi(x_m)$ and $\Psi(y_m)$ respectively.  Thus, $f$ and $g$ satisfy (\ref{itm:def.R.Cauchy.cms}) and (\ref{itm:def.R.isom.cms}).  

Now we suppose for a contradiction that $(f,g)$ does not satisfy condition (\ref{itm:def.R.inv.cms}).  Without loss of generality, suppose 
$d(x_m, x_{g(f(m,n), n')}) > 2^{-n} + 2^{-n'}$.  
Since $(y_{f(m,n)})_{n \in \N}$ and $(x_{g(f(m,n),k )})_{k \in \N}$ are strongly Cauchy sequences that converge to $\Phi(x_m)$ and $\Psi(y_{f(m,n)})$ respectively, 
$d(\Phi(x_m), y_{f(m,n)}) \leq 2^{-n}$ and 
$d(x_{g(f(m,n), n')}, \Psi(y_{f(m,n)})) \leq 2^{-n'}$.  Since
$\Psi$ is an isometry, $d(\Psi\Phi(x_m), \Psi(y_{f(m,n)})) \leq 2^{-n}$.   
Thus, $d(\Psi\Phi(x_m), x_{g(f(m,n), n')}) \leq 2^{-n} + 2^{-n'}$.  
But $\Psi\Phi(x_m) = x_m$, so we have our contradiction.  

Now we consider condition (\ref{itm:def.R.OpsPres}).  
Let $T$ be an $n$-ary operation of $\M$, and let $m,k,j_1, \ldots, j_n,k_1, \ldots, k_n \in \N$ so that $m$ satisfies the condition.  
Since $\Delta_T$ is a modulus of continuity for $T$, 
\begin{dmath*}
d(y_{\zeta'_T(f(j_1, k_1), \ldots, f(j_n,k_n))}, T(\Phi(x_{j_1}), \ldots, \Phi(x_{j_n}))) \\
 =  d(T(y_{f(j_1, k_1)}, \ldots, y_{f(j_n, k_n)}), T(\Phi(x_{j_1}), \ldots, \Phi(x_{j_n}))) \\
  <   2^{-m}.
\end{dmath*}
Since $\Phi$ is an isomorphism, 
\begin{dmath*}
d(T(\Phi(x_{j_1}), \ldots, \Phi(x_{j_n})), y_{f(\zeta_T(j_1, \ldots, j_n), k)})  =  
d(T(\Phi(x_{j_1}), \ldots, \Phi(x_{j_n})), \Phi(T(x_{j_1}, \ldots, x_{j_n}))) + 
d(\Phi(T(x_{j_1}, \ldots, x_{j_n})), y_{f(\zeta_T(j_1, \ldots, j_n), k)}) 
 =  d(\Phi(T(x_{j_1}, \ldots, x_{j_n})), y_{f(\zeta_T(j_1, \ldots, j_n), k)}) \leq 2^{-(k+1)}.
\end{dmath*}

Next, we demonstrate that $(f,g)$ satisfies condition (\ref{itm:def.R.Func.Pres}). Suppose $F$ is an $n$-ary functional of $\M$, and let 
$m, j_1, \ldots, j_n, k_1, \ldots, k_n \in \N$ so that $\Delta_F(m) \leq \min_s k_s + 1$.  Then 
\begin{multline*}
|F(x_{j_1}, \ldots, x_{j_n}) - F(y_{f(j_1, k_1)}, \ldots, y_{f(j_n, k_n)})| \\
\leq |F(x_{j_1}, \ldots, x_{j_n}) - F(\Phi(x_{j_1}), \ldots, \Phi(x_{j_n}))| \\
+ |F(\Phi(x_{j_1}), \ldots, \Phi(x_{j_n})) - F(y_{f(j_1, k_1)}, \ldots, y_{f(j_n, k_n)})|.
\end{multline*}
Since $\Phi$ is an isomorphism, $F(x_{j_1}, \ldots, x_{j_n}) = F(\Phi(x_{j_1}), \ldots, \Phi(x_{j_n}))$.
Since $\Delta_F$ is a modulus of continuity for $F$, 
$|F(\Phi(x_{j_1}), \ldots, \Phi(x_{j_n})) - F(y_{f(j_1, k_1)}, \ldots, y_{f(j_n, k_n)})| \leq 2^{-m}$.\\

Finally, we show $(f,g)$ satisfies condition (\ref{itm:def.R.Const.Pres}). Let $c$ be a constant of $\M$, and let $j,k \in \N$.  
Then 
\begin{eqnarray*}
d(y_{\zeta'_c(j)}, y_{f(\zeta_c(j), k)}) & \leq & d(y_{\zeta'_c(j)}, c) + d(y_{f(\zeta_c(j), k)}, \Phi(x_{\zeta_c(j)})) + 
d(\Phi(x_{\zeta_c(j)}), c)\\
\ & \leq & 2^{-j} + 2^{-(k+1)} + d(\Phi(x_{\zeta_c(j)}), c).
\end{eqnarray*}
Since $\Phi$ is an isometric isomorphism, 
\[
d(\Phi(x_{\zeta_c(j)}), c) = d(\Phi(x_{\zeta_c(j)}), \Phi(c)) = d(x_{\zeta_c(j)}, c) \leq 2^{-j}.
\]
Thus, (\ref{itm:def.R.Const.Pres}) is satisfied, and so $(f,g)$ is a $\d$-computable pair in $\cR$.

Conversely, suppose $\d$ computes a pair $(f,g) \in \cR$.  Let $\Phi(x_m) = \lim_n x_{f(m,n)}$, and let 
$\Psi(y_m) = \lim_n y_{g(m,n)}$.  Note that by (\ref{itm:def.R.isom.cms}), $\Phi$ and $\Psi$ are well defined and have isometric extensions to $\M$.  We denote these extensions by $\Phi$ and $\Psi$ as well.  

We claim that $\Psi\Phi(x_m) = x_m$.  We suppose otherwise for a contradiction.  Then there exists $n_1 \in \N$ so that 
$d(x_m, \Psi(y_{f(m,n_1)})) > 2^{-n_1}$.  Thus, there also exists $n_2 \in \N$ so that 
$d(x_m, x_{g(f(m,n_1), n_2)}) > 2^{-n_1} + 2^{-n_2}$.  This contradicts 
condition (\ref{itm:def.R.inv.cms}).  

We similarly show $\Psi\Phi(y_m) = y_m$.  Thus, by continuity, $\Phi = \Psi^{-1}$.  

Since $\Phi$ and $\Psi$ are $\d$-computable on the rational points of $\M^\#$ and $\M^+$ respectively, and since they have a computable modulus of computability, they are $\d$-computable.

We now show $\Phi$ preserves the operations and functionals of $\M$.   To begin, suppose $T$ is an $n$-ary operation of $\M$.  
We must show that 
$\Phi(T(p_1, \ldots, p_n)) = T(\Phi(p_1), \ldots, \Phi(p_n))$ for all points $p_1, \ldots, p_n$ of $\M$.  
By continuity, it suffices to consider the case where each $p_j$ is a rational point of 
$\M^\#$.  So, for each $s \in \{1, \ldots, n\}$, let $j_s$ be an index so that $p_s = x_{j_s}$.  It then suffices to show that for each $\epsilon > 0$, 
\[
d(\Phi(T(x_{j_1}, \ldots, x_{j_n})), T(\Phi(x_{j_1}), \ldots, \Phi(x_{j_n}))) < \epsilon.
\]
We thus let $\epsilon > 0$ and choose $m \in \N$ so that $2^{-m}  < \epsilon$.  Choose $k \in \N$ so that $2^{-m} + 2^{-(k+1)} < \epsilon$ and $m \leq k+1$.  By (\ref{itm:def.R.OpsPres}), 
\[
d(y_{f(\zeta_T(j_1, \ldots, j_n), k)}, y_{\zeta'_T(f(j_1, k_1), \ldots, f(j_n, k_n))}) \leq 2^{-(k+1)} + 2^{-m}.
\]
Since $y_{\zeta'_T(f(j_1, k_1), \ldots, f(j_n, k_n))} = T(y_{f(j_1, k)}, \ldots, y_{f(j_n, k)})$, it follows from the continuity of $T$ and $\Phi$ that 
\[
d(\Phi(x_{\zeta_T(j_1, \ldots, j_n)}), T(\Phi(x_{j_1}), \ldots, \Phi(x_{j_n}))) \leq 2^{-m},
\]
but $x_{\zeta_T(j_1, \ldots, j_n)}) = T(x_{j_1}, \ldots, x_{j_n})$.  

It similarly follows from condition (\ref{itm:def.R.Func.Pres}) that if $F$ is an $n$-ary functional of $\M$, then 
$F(p_1, \ldots, p_n) = F(\Phi(p_1), \ldots, \Phi(p_n))$ for all points $p_1, \ldots, p_n$ of $\M$. 
Finally, it follows from (\ref{itm:def.R.Const.Pres}) that if $c$ is a constant of $\M$, then $\Phi(c) = c$. 
\end{proof}

We note that the proof of Lemma \ref{lm:pi.01} is uniform in that an index of $\cR$ can be computed uniformly from indices of 
$\M^\#$ and $\M^+$.  

\begin{proof}[Proof of Theorem \ref{thm:low.ii}]
Suppose $\d$ is low for isomorphism.  
Suppose also that $\M^\#$ and $\M^+$ are computable presentations of a metric structure so that $\d$ computes an isometric isomorphism of $\M^\#$ onto $\M^+$.  Let $\cR$ be a $\Pi_1^0$ class as given by Lemma \ref{lm:pi.01}.  
Then, by Theorem 4 of \cite{ft-lowpaths}, $\d$ computes a point of $\cR$.  
Thus, $\d$ computes an isometric isomorphism of $\M^\#$ onto $\M^+$.  

Conversely, suppose $\d$ is low for isometric isomorphism of metric structures.  Again, since every countable algebraic structure
can be represented as a metric structure, it follows that $\d$ is low for isometric isomorphism of countable algebraic structures.
\end{proof}

\section{Lowness for isometry of metric spaces}\label{sec:metric}

The class of metric spaces is, of course, the class of $\mathcal{S}$-structures where $\mathcal{S}$ has no operation or functional symbols.  Thus, when we apply Definition \ref{def:low.ii} to these structures, we omit the term `isomorphism', and we state our main result as follows.

\begin{thm}\label{thm:low.ii.metric}
Every Turing degree is low for isomorphism if and only if it is low for isometry.
\end{thm}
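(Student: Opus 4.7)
The plan is to prove both directions, leveraging Theorem \ref{thm:low.ii} and Lemma \ref{lm:pi.01}. The forward direction is essentially free: by Theorem \ref{thm:low.ii}, lowness for isomorphism coincides with lowness for isometric isomorphism of arbitrary metric structures, and metric spaces are precisely the interpretations of the empty signature (no operation, functional, or constant symbols), so lowness for isometric isomorphism specializes to lowness for isometry.

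For the converse, suppose $\d$ is low for isometry. By Theorem \ref{thm:low.ii} it suffices to show that $\d$ is low for isometric isomorphism of arbitrary metric structures. Let $\M^\#$ and $\M^+$ be computable presentations of a metric structure between which $\d$ computes an isometric isomorphism; Lemma \ref{lm:pi.01} produces a $\Pi_1^0$ class $\cR \subseteq \N^\N$ with a $\d$-computable member, whose members code isometric isomorphisms $\M^\# \to \M^+$. My plan is to construct, uniformly and computably from $\M^\#$ and $\M^+$, computable metric space presentations $X^\#$ and $Y^\#$ so that $\d$-computable isometries $X \to Y$ correspond effectively to $\d$-computable members of $\cR$. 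Applying lowness for isometry to the pair $X^\#, Y^\#$ then yields a computable isometry, which decodes to a computable member of $\cR$, hence (by Lemma \ref{lm:pi.01}) to a computable isometric isomorphism $\M^\# \to \M^+$.

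The principal obstacle is the design of $X^\#$ and $Y^\#$: purely metric data must encode the algebraic information (operations, functionals, constants) tightly enough that every isometry of the spaces mirrors an isometric isomorphism of the structures. My approach would be to decorate each rational point of $\M^\#$ with a uniformly computable cluster of auxiliary points whose internal distance profile is unique to that point, so that any isometry is forced to match up corresponding rational points. For each $n$-ary operation symbol $T$ and tuple $(x_{j_1}, \ldots, x_{j_n})$ of rational points, I would attach a further auxiliary point at prescribed, pairwise distinct rational distances from each $x_{j_s}$ and from $x_{\zeta_T(j_1, \ldots, j_n)} = T(x_{j_1}, \ldots, x_{j_n})$, and analogously for functional values and constants; the distances would be chosen both to be computable uniformly in the data and to be sufficiently generic that no spurious isometries arise by accidental distance coincidences.

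The correspondence between isometries of the decorated spaces and members of $\cR$ then amounts to a rigidity argument: any isometry of $X \to Y$ must map each decoration gadget to the unique matching gadget, and the forced pairing on rational points must in turn satisfy conditions (\ref{itm:def.R.Cauchy.cms})--(\ref{itm:def.R.Const.Pres}) of Lemma \ref{lm:pi.01}. Carrying out this verification carefully, and in particular arranging the gadget distances so that operations and functionals are faithfully encoded with computable moduli while the underlying metric of $\M$ is not perturbed, is where the bulk of the technical work will reside. The uniformity noted after Lemma \ref{lm:pi.01} (that indices of $\cR$ are computable from indices of the presentations) is what allows the construction of $X^\#$ and $Y^\#$ to be uniform in $\M^\#$ and $\M^+$, closing the argument.
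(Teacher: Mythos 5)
Your forward direction is exactly the paper's: specialize Theorem \ref{thm:low.ii} to the empty signature. The converse, however, contains a genuine flaw, and it lies in the two requirements you impose on the decorated spaces $X^\#$ and $Y^\#$, which are mutually inconsistent. On one hand, your rigidity argument requires the gadgets to pin down rational points exactly: if a gadget attached to the rational point $x_j$ of $\M^\#$ has a distinctive exact distance profile, then an isometry $X \to Y$ can only map it to a gadget with the \emph{identical} profile, which forces exact coincidences such as $d(x_j, x_{j'}) = d(y_k, y_{k'})$ between the rational points of the two presentations. On the other hand, your argument needs the hypothesized $\mathbf{d}$-computable isometric isomorphism $\Phi : \M^\# \to \M^+$ to lift to a $\mathbf{d}$-computable isometry $X \to Y$ (otherwise lowness for isometry is never applicable to the pair). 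But $\Phi$ does not map rational points of $\M^\#$ to rational points of $\M^+$, so there is no gadget in $Y$ sitting at $\Phi(x_j)$, and the exact rigidity you engineered means no isometry $X \to Y$ extends (or even approximates) $\Phi$ at all: in general the decorated spaces are simply not isometric, even though the underlying structures are isometrically isomorphic. Any scheme that encodes the index $j$ or the operation data into exact metric invariants near $x_j$ suffers this fate; and the fallback of choosing ``sufficiently generic'' gadget distances is also not effectively realizable, since you only ever possess rational approximations to the metric of $\M$ and so cannot certify the avoidance of exact profile coincidences.

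The paper sidesteps all of this by aiming much lower: for the converse it is enough to show that a degree low for isometry is low for isomorphism of \emph{countable algebraic} structures, and by the universality of graphs \cite{hkss02} it suffices to handle graphs. Graphs embed into metric spaces by Melnikov's trivial coding \cite{Melnikov.2013}: $d_G(v_0,v_1)$ is $0$, $1$, or $2$ according to whether the vertices are equal, adjacent, or non-adjacent. Because these spaces are discrete, every point is a distinguished point, isometries are exactly graph isomorphisms, and the encoding/decoding tension above never arises; composing with Theorem \ref{thm:low.ii} then recovers lowness for isometric isomorphism of arbitrary metric structures, which is the stronger conclusion you were trying to reach directly. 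If you want to salvage your plan, restrict your encoding to discrete structures, where rational points exhaust the space and isometric isomorphisms do respect them --- at which point your gadget machinery reduces to the paper's $0/1/2$ metric.
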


\begin{proof}
It follows from Theorem \ref{thm:low.ii} that every Turing degree that is low for isomorphism is low for isometry.
Suppose that $\mathbf{d}$ is low for isometry. We use Melnikov's technique of representing a graph as a metric space \cite{Melnikov.2013}; this will suffice because graphs are universal structures \cite{hkss02}. We will represent undirected graphs with no loops as metric spaces as follows. Suppose that $G=(V,E)$ is such a graph. We define
$$
d_G(v_0,v_1)=\begin{cases}
0 & v_0=v_1 \\
1 & (v_0,v_1)\in E \\
2 & \mbox{else}
\end{cases}.
$$
This is clearly a metric, so we can write $M(G)$ for the metric space given by $(G,d_G)$. Now suppose that the graph $G_0=(V_0,E_0)$ is ${\mathbf{d}}$-isomorphic to the graph $G_1=(V_1,E_1)$. Since ${\mathbf{d}}$ can compute an isomorphism from $G_0$ to $G_1$,  ${\mathbf{d}}$ can clearly compute an isometry between $M(G_0)$ to $M(G_1)$. However, since $\mathbf{d}$ is low for isometry, there is a computable isometry from $M(G_0)$ to $M(G_1)$. Since $\mathbf{0}$ can compute a distance-preserving function from $M(G_0)$ to $M(G_1)$, there is a computable function that maps each pair of vertices $(v_0,v_1)$ in $G_0$ to another pair of points in $G_1$ with the same distance between them, that is, another pair of points with the same edge-relation (identical, connected by an edge, or not connected by an edge). This computable function will give us a graph isomorphism from $G_0$ to $G_1$.
\end{proof}

This theorem allows us to make some observations based on Franklin and Solomon's work on the degrees that are low for isomorphism: every 2-generic is low for isomorphism and thus low for isometry and isometric isomorphism, there are hyperimmune-free degrees that are low for isomorphism and thus low for isometry and isometric isomorphism, etc.\ \cite{fs-lowim}. 

\section{Results on Banach spaces}\label{sec:banach}

Let $\F_\Q = \F \cap \Q(i)$.  We refer to the elements of $\F_\Q$ as \emph{rational scalars}.

Let $\LB$ denote the metric signature of Banach spaces, which consists of a binary operation symbol $\mathquote{+}$, 
a unary operation symbol $\mathquote{\cdot_s}$ for each rational scalar $s$, a unary functional symbol $\mathquote{\norm{\ }}$, and a constant symbol $\mathquote{\zerovec}$.  Clearly, $\LB$ is computable.

Let $\B$ be a Banach space.  Then $\B$ can be represented as the interpretation of $\LB$ in which $\mathquote{+}$ is interpreted as vector addition, $\mathquote{\cdot_s}$ is interpreted as multiplication by the scalar $s$, $\mathquote{\norm{\ }}$ is interpreted as the norm of $\B$, and $\mathquote{\zerovec}$ is interpreted as the zero vector of $\B$.  
There is no loss of generality due to the restriction to rational scalars.  In particular, any map that preserves multiplication by rational 
scalars also preserves multiplication by scalars.

If $\B^\#$ is a presentation of a Banach space $\B$, then the rational points of $\B^\#$ are precisely the rational linear combinations of distinguished points of $\B^\#$, i.e., vectors that can be expressed in the form $\sum_{j \leq M} \alpha_j v_j$ where $\alpha_j \in \F_\Q$ and each $v_j$ is a rational vector of $\B^\#$.  

The following is an immediate consequence of Lemma \ref{lm:pi.01}.

\begin{thm}\label{thm:low.ii.Banach}
Every Turing degree that is low for isomorphism is also low for 
isometric isomorphism of Banach spaces.
\end{thm}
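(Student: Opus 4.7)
The plan is to mirror the proof of Theorem \ref{thm:low.ii}, specialized to the signature $\LB$. The signature $\LB$ of Banach spaces is computable (as noted in the paragraph establishing the notation), and every Banach space $\B$ is, by design, an interpretation of $\LB$. Thus Lemma \ref{lm:pi.01} applies verbatim to any pair of computable presentations of $\B$.

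First, suppose $\d$ is low for isomorphism, and suppose $\B^\#$ and $\B^+$ are computable presentations of a Banach space $\B$ such that $\d$ computes an isometric isomorphism of $\B^\#$ onto $\B^+$. Applying Lemma \ref{lm:pi.01} to the signature $\LB$, I obtain a $\Pi_1^0$ class $\cR \subseteq \N^\N$ with the property that a Turing degree computes a point of $\cR$ exactly when it computes an isometric isomorphism of $\B^\#$ onto $\B^+$. In particular, $\d$ computes a point of $\cR$.

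Next, I invoke the same tool used in the proof of Theorem \ref{thm:low.ii}, namely Theorem~4 of \cite{ft-lowpaths}, which characterizes lowness for isomorphism in terms of the ability to be replaced by a computable oracle when computing paths through certain $\Pi_1^0$ classes. Applying it to $\cR$ and to $\d$ yields a computable point of $\cR$, which via the backward direction of Lemma \ref{lm:pi.01} corresponds to a computable isometric isomorphism of $\B^\#$ onto $\B^+$. This shows $\d$ is low for $\B^\#$ isometric isomorphism; since $\B$, $\B^\#$, and $\B^+$ were arbitrary, $\d$ is low for isometric isomorphism of Banach spaces.

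There is essentially no obstacle here: once one observes that $\LB$ is a computable metric signature, Banach spaces fall under the hypotheses of Lemma \ref{lm:pi.01}, and the conclusion follows by the very same two-step argument (Lemma \ref{lm:pi.01} plus Theorem~4 of \cite{ft-lowpaths}) that establishes Theorem \ref{thm:low.ii}. Indeed, Theorem \ref{thm:low.ii.Banach} is a direct corollary of Theorem \ref{thm:low.ii}, since the class of Banach spaces is a subclass of the class of interpretations of computable metric signatures.
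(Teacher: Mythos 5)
Your proposal is correct and takes essentially the same route as the paper: the paper presents Theorem \ref{thm:low.ii.Banach} as an immediate consequence of Lemma \ref{lm:pi.01}, i.e., precisely the two-step argument you give (the signature $\LB$ is computable, so Lemma \ref{lm:pi.01} yields the $\Pi_1^0$ class, and Theorem~4 of \cite{ft-lowpaths} converts the $\d$-computable point into a computable one). Your closing remark that the theorem is just Theorem \ref{thm:low.ii} restricted to the subclass of Banach spaces is also the intended reading.
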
    

The main obstacle to proving the converse of Theorem \ref{thm:low.ii.Banach} is the apparent lack of a method of effectively encoding members of a sufficiently universal class of countable algebraic structures into Banach spaces.  For example, the proof of Theorem \ref{thm:low.ii.metric} turns on a technique for representing graphs as metric spaces.  We are not aware of any such method for representing graphs as Banach spaces.  The closest things we are aware of are the techniques for encoding well-founded trees into Banach spaces in \cite{Ferenczi.Louveau.Rosendal.2009}.  However, the class of well-founded trees is not sufficiently universal (in the sense discussed in the proof of Theorem \ref{thm:low.ii.Banach}).

\section{Results on Lebesgue spaces}\label{sec:lebesgue}

We begin by recalling the definition of $L^p(\Omega)$.

\begin{definition}\label{def:Lp}
Let $\Omega = (X, \mathcal{S}, \mu)$ be a measure space, and suppose $1 \leq p < \infty$.
Then $L^p(\Omega)$ is the set of all measurable $f : X \rightarrow \F$ such that 
	$\int |f|^p\ d\mu < \infty$.  
\end{definition}

If $1 \leq p < \infty$, then $L^p(\Omega)$ is a Banach space under the norm 
	\[
	\norm{f}_p = \left( \int |f|^p\ d\mu \right)^{1/p}
	\]
provided we identify functions that agree almost everywhere.  Thus, a vector in $L^p(\Omega)$ is not a function but an equivalence class of functions.

A Banach space $\B$ is an \emph{$L^p$-space} if there is a measure space $\Omega$ so that 
$L^p(\Omega) = \B$.  A Banach space is a \emph{Lebesgue space} if it is an $L^p$-space for some $p$.

We do not consider $L^\infty$-spaces since no infinite-dimensional $L^\infty$-space is separable, and our treatment of computability on Banach spaces presumes separability.  

Particular $L^p$-spaces of interest are the following. 

\begin{definition}\label{def:Lplplpn}
\begin{enumerate}
	\item $L^p[0,1] = L^p([0,1], \mathcal{S}, m)$ where $\mathcal{S}$ is the $\sigma$-algebra of Lebesgue measurable subsets of $[0,1]$ and $m$ is the Lebesgue measure on $[0,1]$.
	
	\item $\ell^p = L^p(\N, \mathcal{P}(\N), \mu)$ where $\mu$ is the counting measure on $\N$. 
	
	\item $\ell^p_n = L^p(\{1, \ldots, n\}, \mathcal{P}(\{1, \ldots, n\}), \mu)$ where $\mu$ is the counting measure on $\{1, \ldots, n\}$.
\end{enumerate} 
\end{definition}

When $V_0$ and $V_1$ are vector spaces, we let $V_0 \oplus V_1$ denote their external 
direct sum.  Suppose $\strb_0$ and $\strb_1$ are Banach spaces.  
Then $\strb_0 \oplus_p \strb_1$ consists of the vector space $\strb_0 \oplus \strb_1$ together with the 
norm defined by 
\[
\norm{(v_0,v_1)}^p = \norm{v_0}_{\strb_0}^p + \norm{v_1}_{\strb_1}^p.
\]
$\strb_0 \oplus_p \strb_1$ is called the \emph{$L^p$-sum} of $\strb_0$ and $\strb_1$ and is a Banach space in its own right.  

It is well known that every nonzero $L^2$-space is isometrically isomorphic to $\ell^2$ or to $\ell^2_n$ for some $n$.  For $p \neq 2$, we can classify all nonzero separable $L^p$-spaces using the three Banach spaces defined in Definition \ref{def:Lplplpn} or their $L^p$-sums.  A proof of the following theorem can be found in \cite{Cembranos.Mendoza.1997}.

\begin{thm}[Classification of separable $L^p$-spaces]\label{thm:classification}
Suppose $1 \leq p < \infty$ and $p \neq 2$.  
Then every nonzero separable $L^p$-space is isometrically isomorphic to exactly one of the following.
\begin{enumerate}
	\item $\ell^p_n$ for some $n \geq 1$.  In this case, the underlying measure space is purely atomic and has exactly $n$ atoms.
	
	\item $\ell^p$.  In this case, the underlying measure space is purely atomic and has $\aleph_0$ atoms.
	
	\item $L^p[0,1]$.  In this case, the underlying measure space is nonatomic.
	
	\item $\ell^p_n \oplus_p L^p[0,1]$ for some $n \geq 1$.  In this case, the underlying measure space
	has exactly $n$ atoms but is not purely atomic.
	
	\item $\ell^p \oplus_p L^p[0,1]$.  In this case, the underlying measure space
	has $\aleph_0$ atoms but is not purely atomic.
\end{enumerate}
\end{thm}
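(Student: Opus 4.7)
The plan is to reduce the classification to two structural facts about measure spaces: the decomposition into atomic and nonatomic parts, and the uniqueness up to measure isomorphism of separable $\sigma$-finite nonatomic measure spaces. Fix a nonzero separable $L^p(\Omega)$ with $\Omega = (X, \mathcal{S}, \mu)$ and $p \neq 2$. Write $X_a$ for the union of all atoms of $\Omega$ and $X_n = X \setminus X_a$. First I would observe that separability of $L^p(\Omega)$ forces the atoms to form an at most countable collection $\{A_k\}_{k \in I}$, since the normalized indicators $\mu(A_k)^{-1/p}\mathbf{1}_{A_k}$ are pairwise at distance $2^{1/p}$ and hence cannot form an uncountable family inside a separable space.

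Next I would establish the decomposition $L^p(\Omega) \cong L^p(X_a, \mu|_{X_a}) \oplus_p L^p(X_n, \mu|_{X_n})$ via the restriction map $f \mapsto (f|_{X_a}, f|_{X_n})$, whose isometric character is immediate from the definition of the $L^p$-norm. On the atomic summand, any $L^p$-function is almost everywhere constant on each $A_k$, so the map sending $f|_{X_a}$ to the sequence $(\mu(A_k)^{1/p} c_k)_{k \in I}$, where $c_k$ is the almost-everywhere constant value of $f$ on $A_k$, furnishes an isometric isomorphism onto $\ell^p_n$ when $|I| = n$ is finite and onto $\ell^p$ when $|I| = \aleph_0$.

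For the nonatomic summand I would invoke the classical Carath\'eodory-type theorem that any separable, $\sigma$-finite, nonatomic measure space is measure-preservingly isomorphic, after rescaling by a positive constant, to $([0,1], m)$; this yields an isometric isomorphism $L^p(X_n, \mu|_{X_n}) \cong L^p[0,1]$ whenever $\mu(X_n) > 0$. Combining the two summands produces one of the five listed cases depending on whether $|I|$ is finite, countably infinite, or zero, and whether $\mu(X_n)$ is zero or positive. Uniqueness is then a matter of separating the five cases by Banach space invariants: finite versus infinite dimensionality of the atomic summand, and the abundance versus the absence of denting points in the unit sphere that distinguishes a purely atomic $L^p$ from $L^p[0,1]$ when $p \neq 2$; the hypothesis $p \neq 2$ is essential here, since every infinite dimensional separable Hilbert space is isometrically isomorphic to $\ell^2$ and the atomic/nonatomic distinction collapses.

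The main obstacle is the Carath\'eodory-type measure isomorphism theorem for the nonatomic part, which is where essentially all of the classical difficulty of the result resides; this is proved by realizing the measure algebra as a standard probability algebra and constructing a dyadic measure-preserving bijection onto $[0,1]$ via a Marczewski-style argument. The remaining ingredients—the countability of atoms, the $\oplus_p$-decomposition, and the separation of the five cases by Banach space invariants—are comparatively routine once the measure-theoretic backbone is in place.
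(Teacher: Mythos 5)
Your existence half follows the standard route (and note the paper itself does not prove this theorem; it cites Cembranos--Mendoza for it): countability of the atoms via the $2^{1/p}$-separated normalized indicators, the $\oplus_p$-splitting along $X_a \cup X_n$, the identification of the atomic summand with $\ell^p_n$ or $\ell^p$, and a measure-algebra isomorphism theorem for the nonatomic part. Two repairs are needed there, one small and one real. The small one: ``measure-preservingly isomorphic after rescaling by a positive constant'' fails when $\mu(X_n)=\infty$ (Lebesgue measure on $\R$ is $\sigma$-finite, nonatomic, and separable, but no rescaling makes it a probability space); you must first replace $\mu|_{X_n}$ by an equivalent finite measure $\nu$ and use the change-of-density isometry $f \mapsto (d\mu/d\nu)^{1/p}f$, and you should also dispose of infinite-measure atoms and non-$\sigma$-finite pieces, which contribute nothing to $L^p$ but do sit inside ``the underlying measure space.''

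The genuine gap is in the uniqueness (``exactly one'') part. Your proposed invariant --- abundance versus absence of denting points of the unit ball --- does not distinguish the purely atomic spaces from $L^p[0,1]$ when $1<p<\infty$: by Clarkson's inequalities $L^p(\Omega)$ is uniformly convex for $1<p<\infty$, and in a uniformly convex space \emph{every} norm-one vector is a denting (indeed strongly exposed) point, so $\ell^p$ and $L^p[0,1]$ have identical denting-point behavior; the extreme/denting-point dichotomy works only at $p=1$, where the unit ball of $L^1[0,1]$ has no extreme points while that of $\ell^1$ does. The correct mechanism --- and the one this paper leans on elsewhere (Section 6) --- is Lamperti's theorem: for $p \neq 2$, every linear isometry of $L^p$-spaces preserves disjointness of support, hence a surjective isometric isomorphism preserves the component order $\preceq$ and carries its atoms to atoms. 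Since $f$ is an atom of $\preceq$ exactly when $\supp(f)$ is an atom of the measure space, the number of atoms ($0$, a finite $n$, or $\aleph_0$) and pure atomicity are isometric invariants, and these separate the five cases. This is also precisely where $p \neq 2$ enters; your Hilbert-space remark correctly shows the hypothesis must be used somewhere but does not supply an invariant that uses it.
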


Let $1 \leq p < \infty$.  The standard presentations of $\ell^p_n$ and $\ell^p$ are given by the standard bases for these 
spaces.  Let $\{D_n\}_{n \in \N}$ be a standard enumeration of the dyadic subintervals of $[0,1]$.  
The standard presentation of $L^p[0,1]$ is the presentation in which the $n$-th distinguished point is 
$\Ind_{D_n}$, or the characteristic (indicator) function of $D_n$.   The standard presentations of $\ell^p_n \oplus L^p[0,1]$ and $\ell^p \oplus L^p[0,1]$ are defined 
accordingly.

Our results on lowness for isometric isomorphism of these structures are the following.

\begin{thm}\label{thm:low.4.ii.lp}
Suppose $1 \leq p < \infty$ is computable and $p \neq 2$.  
Then every Turing degree is low for isometric isomorphism of $\ell^p$ if and only if it does not bound a c.e. degree.
\end{thm}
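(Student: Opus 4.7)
The plan is to reduce the theorem to a classification of the Turing degrees of isometric isomorphism of computable presentations of $\ell^p$ and then combine with the observation following Definition~\ref{def:deg.ii}. That observation says that $\mathbf{d}$ is low for $\ell^p$ isometric isomorphism if and only if $\mathbf{d}$ does not bound the degree of isometric isomorphism of $(\ell^p, \ell^p_+)$ for any computable presentation $\ell^p_+$ of $\ell^p$. The key intermediate claim, essentially the content of \cite{McNicholl.Stull.2019}, is that a nonzero Turing degree is realized as such a degree of isometric isomorphism exactly when it is a c.e.\ degree.

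The upper-bound half of this classification rests on the Banach--Lamperti theorem: for $p \neq 2$, every isometry between $\ell^p$-spaces is essentially a signed permutation of disjointly supported standard basis vectors. Consequently, an isometric isomorphism of the standard $\ell^p$ onto any computable copy $\ell^p_+$ is determined by the images of the standard basis vectors $e_i$, each of which must be a unit vector of $\ell^p_+$ that is $p$-disjoint from the others (i.e., satisfies $\norm{u+v}^p = \norm{u}^p + \norm{v}^p$). Since norms in $\ell^p_+$ are effective, one can c.e.\ enumerate sufficiently good rational approximations to such candidate images, and an appropriate matching with the $e_i$ can be read off a c.e.\ index. This promotes the $\Pi^0_1$ class of Lemma~\ref{lm:pi.01} to a c.e.\ upper bound on the degree of isometric isomorphism.

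The lower-bound half is a direct construction. Given a nonzero c.e.\ set $A$, one builds $\ell^p_+$ whose distinguished points are perturbations of the $e_i$ arranged so that identifying the true basis vectors requires deciding membership in $A$, while the enumeration of $A$ suffices to compute the isometric isomorphism onto the standard $\ell^p$. Careful bookkeeping keeps $\ell^p_+$ isometrically isomorphic to $\ell^p$ while ruling out any computable isometric isomorphism.

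Granting these two halves, the theorem follows. If $\mathbf{d}$ bounds a nonzero c.e.\ degree $\mathbf{a}$, realize $\mathbf{a}$ as a degree of isometric isomorphism of some $(\ell^p, \ell^p_+)$: then $\mathbf{d}$ computes an isometric isomorphism of $\ell^p$ onto $\ell^p_+$ while no computable such isomorphism exists, violating lowness. Conversely, if $\mathbf{d}$ bounds no nonzero c.e.\ degree, then any $\mathbf{d}$-computable isometric isomorphism between $\ell^p$ and a computable $\ell^p_+$ has c.e.\ degree bounded by $\mathbf{d}$, forcing it to be $\mathbf{0}$, which yields a computable isometric isomorphism. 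The hardest step is the c.e.\ upper bound; the Banach--Lamperti rigidity is conceptually clean, but extracting a c.e.\ index for a basis-matching from the $\Pi^0_1$ class of Lemma~\ref{lm:pi.01} requires genuine $\ell^p$-specific effective analysis rather than a generic appeal.
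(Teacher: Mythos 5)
Your proposal is correct and takes essentially the same route as the paper: the paper derives Theorem \ref{thm:low.4.ii.lp} immediately from the McNicholl--Stull classification in \cite{McNicholl.Stull.2019} (for computable $p \geq 1$, $p \neq 2$, the degrees of isometric isomorphism of computable copies of $\ell^p$ are exactly the c.e.\ degrees) combined with the observation following Definition \ref{def:deg.ii}, which is precisely your reduction. Your additional sketch of the cited classification (Banach--Lamperti rigidity for the c.e.\ upper bound, a coding construction for realizing each nonzero c.e.\ degree) is material the paper leaves entirely to the citation, and your handling of the nonzero-degree quantifier in the final combination is correct.
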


\begin{thm}\label{thm:low.4.ii.lpn.Lp01}
A Turing degree is low for $\ell^p_n \oplus_p L^p[0,1]$-isometric isomorphism if and only if 
it does not bound a c.e. degree.
\end{thm}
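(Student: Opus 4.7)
The plan is to model the proof on Theorem \ref{thm:low.4.ii.lp}, adding a layer to handle the nonatomic $L^p[0,1]$ summand; throughout, write $\M^\#$ for the standard presentation of $\ell^p_n \oplus_p L^p[0,1]$ and assume $p \neq 2$ is computable.

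For the only-if direction (bounding a c.e.\ degree implies not low), assume a noncomputable c.e.\ set $A$ with $A \leq_T \d$. I would construct a computable presentation $\M^+$ of $\ell^p_n \oplus_p L^p[0,1]$ in which identifying the $n$ atoms codes membership in $A$: one arranges uniformly computable sequences of distinguished points whose limits are pure atoms precisely when their indices enter $A$, with the remaining distinguished points dense in the nonatomic summand. A $\d$-computable isometric isomorphism $\M^\# \to \M^+$ is then available from any $A$-enumeration, but any computable such isomorphism, applied to the basis $e_1, \ldots, e_n$ of the $\ell^p_n$ factor, would pinpoint the atoms of $\M^+$ and hence decide $A$, a contradiction.

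For the if direction, assume $\d$ bounds no c.e.\ degree and let $\M^+$ be a computable presentation $\d$-isometrically isomorphic to $\M^\#$. Lemma \ref{lm:pi.01} furnishes a $\Pi_1^0$ class $\cR$ of codes for such isomorphisms with a $\d$-computable member. The core step is to extract from $\M^+$ alone a c.e.\ set $E$ that computes a member of $\cR$. Since $p \neq 2$, the disjointness relation $\norm{u + w}^p = \norm{u}^p + \norm{w}^p$ on rational vectors of $\M^+$ is computable, so one c.e.-enumerates rational vectors passing increasingly stringent ``pure atom'' tests (every disjoint decomposition leaves one summand a near-scalar multiple of the candidate). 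Since the atomic part of $\M^+$ is exactly $\ell^p_n$, exactly $n$ scalar-equivalence classes of true atoms emerge, and a c.e.\ procedure selects approximations to them. Given these, $\M^+$ effectively splits into its $\ell^p_n$ and $L^p[0,1]$ summands; the former is handled directly since it is finite-dimensional, and the latter via the categoricity of the standard presentation of $L^p[0,1]$ (appealing to results from \cite{Clanin.McNicholl.Stull.2019}). Hence $E \leq_T \d$, which together with the hypothesis on $\d$ forces $E$ computable, yielding a computable isomorphism.

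The principal obstacle is in the if direction: one must verify that the c.e.\ enumeration of atom candidates really does yield, uniformly in $\M^+$, $n$ distinct atomic directions together with enough approximation data to drive the subsequent decomposition, and that splitting off the $L^p[0,1]$ summand is itself $E$-effective so as to leverage the categoricity result. The only-if construction parallels that for $\ell^p$ in Theorem \ref{thm:low.4.ii.lp}; the only new care needed is that the $A$-coded perturbations be supported in the nonatomic summand so as not to create spurious atoms in $\M^+$.
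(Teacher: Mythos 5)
Your overall architecture matches the paper's---reduce lowness to showing that the degrees of isometric isomorphism of $\ell^p_n \oplus_p L^p[0,1]$ are exactly the c.e.\ degrees, extract c.e.\ data from the atoms of $\M^+$, split off the nonatomic summand, and invoke categoricity of $L^p[0,1]$ to finish---but the central step of your if direction is missing. You assert ``Hence $E \leq_T \mathbf{d}$'' with no argument, and nothing in your construction supplies one: $E$ is built from $\M^+$ alone, so the fact that $\mathbf{d}$ computes \emph{some} isometric isomorphism does not place $E$ below $\mathbf{d}$, since distinct isomorphisms could a priori have incomparable degrees. What is needed, and what the paper proves, is that $\deg(E)$ is the \emph{least} degree computing an isometric isomorphism of $\M^\#$ onto $\M^+$: by Lamperti's theorem \cite{Lamperti.1958}, since $p \neq 2$, any isometric isomorphism $T'$ preserves disjointness of support and the component order $\preceq$, hence must carry the standard atoms $e_1, \ldots, e_n$ to the atoms $g_{j_1}, \ldots, g_{j_n}$ of $\M^+$; consequently any degree $\mathbf{d}_1$ computing $T'$ computes the norms $\norm{g_{j_s}}_p$ and therefore their right Dedekind cuts, whose join is precisely the c.e.\ set the paper uses. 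Without this lower-bound argument, the hypothesis that $\mathbf{d}$ bounds no noncomputable c.e.\ degree tells you nothing about $E$, and the proof collapses.

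The atom-extraction step, which you yourself flag as the principal obstacle, is also unsound as described. Disjointness of rational vectors is an equality between computable reals, hence a $\Pi_1^0$ condition, not a computable relation as you claim (and for general $p$ the correct Lamperti-style criterion involves both $\norm{u+w}_p$ and $\norm{u-w}_p$); likewise atomhood (``every disjoint decomposition leaves one summand a near-scalar multiple'') is a co-c.e.-type condition, so there is no c.e.\ enumeration of vectors \emph{passing} such tests. The paper circumvents this with machinery your sketch omits: a computable disintegration (Theorem \ref{thm:disint.comp}) partitioned into uniformly c.e.\ almost norm-maximizing chains (Theorem \ref{thm:comp.partition}); by Theorem \ref{thm:limits.chains} the limits $g_j$ along the chains include exactly the $n$ atoms, the only genuinely noncomputable data being the right cuts of the $\norm{g_{j_s}}_p$, which are c.e.\ because these norms are right-c.e. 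Finally, your only-if coding is broken as stated: arranging that limits of distinguished sequences be atoms ``precisely when their indices enter $A$'' would change the number of atoms and hence the isometric isomorphism type, since the space has exactly $n$ atoms; the correct coding puts $A$ into the right Dedekind cuts of the atoms' norms, and in any case the paper simply cites \cite{Brown.McNicholl.2019} for the fact that every c.e.\ degree is realized as a degree of isometric isomorphism of $\ell^p_n \oplus_p L^p[0,1]$.
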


\begin{thm}\label{thm:low.4.ii.lp.Lp01}
A Turing degree is low for $\ell^p \oplus_p L^p[0,1]$-isometric isomorphism if and only if it does not bound a $\Sigma_2^0$ degree.
\end{thm}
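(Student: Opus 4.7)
The plan is to establish both directions by leveraging the atomic vs.\ nonatomic dichotomy in $\ell^p \oplus_p L^p[0,1]$ and the fact that identifying the atomic part of an arbitrary distinguished vector is inherently $\Sigma_2^0$ in a computable presentation.

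For the necessity direction, suppose $\mathbf{d}$ bounds a noncomputable $\Sigma_2^0$ set $A$ with uniformly computable approximation $f : \N^2 \to \{0,1\}$ satisfying $n \in A$ iff $f(n,s) = 1$ for infinitely many $s$. I would construct a computable presentation $\M^+$ of $\ell^p \oplus_p L^p[0,1]$ in which each distinguished vector $v_n$ is built via a stage-by-stage process: at stage $s$ a perturbation of norm on the order of $2^{-s}$ is placed in the atomic summand if $f(n,s) = 1$ and in the nonatomic summand if $f(n,s) = 0$, with the perturbations chosen so that the limit $v_n$ lies on a single atom of the $\ell^p$-summand precisely when $n \in A$, and lies entirely in the $L^p[0,1]$-summand otherwise. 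Since any isometric isomorphism from $\M^+$ to $\M^\#$ must map atomic vectors to atomic vectors, any such isomorphism computes $A$. Hence no computable isomorphism exists, but $\mathbf{d}$ computes one.

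For the sufficiency direction, suppose $\mathbf{d}$ does not bound a $\Sigma_2^0$ degree, fix a computable presentation $\M^+$ of $\ell^p \oplus_p L^p[0,1]$, and assume $\mathbf{d}$ computes an isometric isomorphism $\Phi$ of $\M^+$ onto $\M^\#$. Define $\chi^+ : \N \to \{0,1\}$ by letting $\chi^+(n) = 1$ iff the $n$-th distinguished point of $\M^+$ has nonzero atomic part. The central step is to show that $\chi^+$ is $\Sigma_2^0$ uniformly in an index of $\M^+$. Using the $L^p$-orthogonality criterion that $u, v$ have disjoint supports iff $\norm{au + bv}^p = |a|^p \norm{u}^p + |b|^p \norm{v}^p$ for all rational scalars $a, b$, one can express ``the $n$-th distinguished vector has nonzero atomic component'' as the existence of a rational-vector witness for a nonzero atomic piece together with a $\Pi_1^0$-verifiable disjointness certification, giving a $\Sigma_2^0$ predicate. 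Since $\Phi$ transports atomic summands to atomic summands and is $\mathbf{d}$-computable, $\chi^+$ is both $\mathbf{d}$-computable and $\Sigma_2^0$, hence computable by hypothesis.

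With $\chi^+$ in hand, I would split $\M^+$ effectively into computable subpresentations of its atomic and nonatomic summands, and likewise for $\M^\#$. Computable isometric isomorphisms of the atomic subsystems (of type $\ell^p$) and of the nonatomic subsystems (of type $L^p[0,1]$) are supplied by the techniques underlying Theorem \ref{thm:low.4.ii.lp} and the corresponding analysis for $L^p[0,1]$, since the hypothesis that $\mathbf{d}$ bounds no $\Sigma_2^0$ degree certainly implies that $\mathbf{d}$ bounds no c.e.\ degree. Taking the direct sum yields the desired computable isometric isomorphism $\M^+ \to \M^\#$. The main obstacle is establishing the uniform $\Sigma_2^0$ bound on $\chi^+$: the naive formulation quantifies over arbitrary vectors in the completion, and the technical work is to show that restricting quantifiers to rational vectors suffices, which rests on the density of rational vectors together with the continuity of the $L^p$-orthogonality predicate in its arguments.
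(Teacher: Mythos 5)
Your proposal follows the right general strategy---classify the information any isometric isomorphism must compute, show it is $\Sigma_2^0$, and realize every $\Sigma_2^0$ degree by a coding construction---but both halves have genuine gaps. In the sufficiency direction, the claim that $\chi^+$ is $\mathbf{d}$-computable from $\Phi$ fails: for the $n$-th distinguished vector $q_n$ of $\M^+$, the condition that $\Phi(q_n)$ has nonzero atomic part is only a $\Sigma_1^0(\mathbf{d})$ event (one can enumerate a coordinate witness but never confirm the atomic part vanishes), and unlike a promise-style coding there is no dichotomy for the distinguished vectors of an arbitrary computable copy, so $\chi^+$ is merely c.e.\ in $\mathbf{d}$. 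More fatally, even a fully computable $\chi^+$ is far too weak to ``split $\M^+$ effectively'' into presentations of the two summands: knowing which distinguished vectors touch the atomic part does not locate the atoms of $\M^+$, which need not lie near any distinguished vector, and computing the projection $P$ on rational vectors requires the atoms themselves together with their norms. That is exactly what the paper extracts: from a computable disintegration $\phi$ and a partition of $\dom(\phi)$ into uniformly c.e.\ almost norm-maximizing chains $C_n$, it forms the infima $g_n = \lim_{\nu \in C_n} \phi(\nu)$ and the sets $A_1$ (coding the right cuts of $\norm{g_n}_p$, which is $\Pi_1^0$ since these norms are uniformly right-c.e.) and $A_2$ (coding smallness of the tails $\norm{\sum_{n \geq M} \chi_{C_n}(\nu)\, g_n}_p$, which is genuinely $\Pi_2^0$), shows $\deg(A_1 \oplus A_2)$ is the \emph{least} degree computing an isometric isomorphism from the standard copy, and concludes lowness because $\mathbf{d}$ bounds this degree and so forces it to be $\mathbf{0}$. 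Note that the needed information is $\Pi_2^0$, not $\Sigma_2^0$ as your rational-witness orthogonality certificates suggest---``the atomic tail is small'' admits no finite positive certificate, and the degree is $\Sigma_2^0$ only because a set and its complement share a degree; indeed, if a local invariant like $\chi^+$ sufficed, the degree of isometric isomorphism would be c.e., contradicting the realization half of the theorem.

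In the necessity direction, your construction does not deliver the dichotomy your decoding step relies on. (A minor point first: ``$n \in A$ iff $f(n,s)=1$ for infinitely many $s$'' characterizes $\Pi_2^0$, not $\Sigma_2^0$, though this is harmless degree-theoretically.) The real problem is that perturbations of norm about $2^{-s}$ committed irrevocably at stage $s$ to one summand cannot be undone: every stage at which the approximation takes the ``wrong'' value leaves a permanent residue, so whenever $f(n,\cdot)$ takes both values infinitely often---and even when it stabilizes, because of the finitely many early wrong-side perturbations---the limit $v_n$ has nonzero components in \emph{both} summands, hence is neither supported on a single atom nor entirely nonatomic, and no isomorphism can read off $A$ from it. A correct coding must be able to retroactively dissolve an atomic commitment into the nonatomic summand (exploiting the one-way asymmetry that an atom can be spread into $L^p[0,1]$ while nonatomic mass can never be reassembled into an atom), all while keeping the norm computable on every rational linear combination of distinguished vectors; this is the delicate construction the paper inherits from Brown and McNicholl for the statement that every $\Sigma_2^0$ degree is the degree of isometric isomorphism of some computable copy, and it is not reproved there. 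You would also need to verify, and do not, that $\mathbf{d}$ itself computes an isometric isomorphism of your $\M^+$ onto the standard copy, not merely that every such isomorphism computes $A$.
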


Theorem \ref{thm:low.4.ii.lp} is an immediate result of the result of Stull and McNicholl that when $p \geq 1$ is computable and not $2$, the degrees of isometric isomorphism of $\ell^p$ are precisely the c.e. Turing degrees.

We now discuss the apparatus from prior work used to prove Theorems \ref{thm:low.4.ii.lp} through \ref{thm:low.4.ii.lp.Lp01}.

Vectors $f,g \in L^p(\Omega)$ are \emph{disjointly supported} if $f \cdot g = \zerovec$. It follows from a result of J. Lamperti that when $p \neq 2$, every isometric endomorphism of an $L^p$-space preserves disjointness of support \cite{Lamperti.1958}.  

We order the vectors of an $L^p$-space as follows.  When $f,g \in L^p(\Omega)$, $f$ is said to be a \emph{component} of $g$ if $f = g \cdot \Ind_A$ for some measurable $A$, where $\Ind_A$ is the characteristic (indicator) function of $A$.  We write $f \preceq g$ if $f$ is a component of $g$.  
Note that $f \preceq g$ if and only if $g - f$ and $f$ are disjointly supported.  Hence, when $p \neq 2$, every isometric endomorphism of an $L^p$-space also preserves $\preceq$.  Note also that 
$f$ is an atom of $\preceq$ if and only if the support of $f$ is an atom of $\Omega$.

If $\strb$ is a Banach space, then a \emph{vector tree} of $\strb$ is an injective map from a subtree of $\N^{< \N}$ into $\strb$.  Suppose $\phi$ is a vector tree of $\strb$, and let $S = \dom(\phi)$.  We say that each vector in $\ran(\phi)$ is a \emph{vector of $\phi$}. We further say $\phi$ is 
\begin{itemize}
\item \emph{nonvanishing} if $\zerovec$ is not a vector of $\phi$,
\item \emph{linearly dense} if its range is linearly dense, and
\item \emph{summative} if 
for every nonterminal node $\nu$ of $\dom(\phi)$, $\phi(\nu) = \sum_{\nu'} \phi(\nu')$ where
$\nu'$ ranges over the children of $\nu$ in $S$.  
\end{itemize}
Additionally, if $\strb$ is an $L^p$-space, we say $\phi$ is \emph{separating} if it always maps incomparable nodes to disjointly supported vectors.  
Finally, we say $\phi$ is a \emph{disintegration} if it is nonvanishing, separating, summative, and linearly dense.

Fix a disintegration $\phi$ of an $L^p$-space.  
A nonroot node $\nu$ of $S$ is an \emph{almost norm-maximizing} child of its parent if 
\[
\norm{\phi(\nu')}_p^p \leq \norm{\phi(\nu)}_p^p + 2^{-|\nu|}
\]
whenever $\nu' \in S$ is a sibling of $\nu$, and a chain $C \subseteq S$ is \emph{almost norm-maximizing} if 
for every $\nu \in C$, if $\nu$ has a child in $S$, then $C$ contains an almost norm-maximizing child of $\nu$.

The following theorem was first proven for $\ell^p$-spaces in \cite{McNicholl.2017} and generalized to arbitrary $L^p$-spaces in \cite{Brown.McNicholl.2019}.

\begin{thm}\label{thm:limits.chains}
Suppose $\phi$ is a disintegration of $L^p(\Omega)$.
\begin{enumerate}
	\item If $C \subseteq \dom(\phi)$ is an almost norm-maximizing chain, then the $\preceq$-infimum of $\phi[C]$ exists and is either \textbf{0} or an atom of $\preceq$.  Furthermore, the $\preceq$-infimum of $\phi[C]$ is the limit in the $L^p$-norm of $\phi(\nu)$ as $\nu$ traverses the nodes in $C$ in increasing order.\label{thm:limits.chains::itm:inf}
		
	\item If $\{C_n\}_{n < \kappa}$ is a partition of $\dom(\phi)$ into almost norm-maximizing chains (where $\kappa \leq \N$), then the $\preceq$-infima of $\phi[C_0], \phi[C_1], ...$ are disjointly supported.  Furthermore, if $A$ is an atom of $\Omega$, then there exists a unique $n$ so that $A$ is the support of the $\preceq$-infimum of $\phi[C_n]$. \label{thm:limits.chains::itm:unique} 
\end{enumerate}
\end{thm}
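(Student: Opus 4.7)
The plan is to prove (1) first, exploiting summativity and separation to make the sequence $\phi(\nu_k)$ for $\nu_k \in C$ both $\preceq$-decreasing and Cauchy in $L^p$, and then to use linear density to force the limit to be $\zerovec$ or an atom. Writing $C$ as $\nu_0 \prec \nu_1 \prec \cdots$, summativity decomposes $\phi(\nu_k)$ as the sum of its children in $S$ (one of which is $\phi(\nu_{k+1})$), and separation makes the summands disjointly supported, so $\phi(\nu_{k+1}) \preceq \phi(\nu_k)$ and the scalar sequence $\|\phi(\nu_k)\|_p^p$ is non-increasing. The disjoint-support identity
\[
\|\phi(\nu_k)\|_p^p = \|\phi(\nu_{k+1})\|_p^p + \|\phi(\nu_k) - \phi(\nu_{k+1})\|_p^p
\]
telescopes to show that $\phi(\nu_k)$ is $L^p$-Cauchy; let $f$ be its norm-limit. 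Standard closure of $\preceq$ under norm limits gives $f \preceq \phi(\nu_k)$ for all $k$, and bounding the part of any common lower bound $h$ outside $\supp f$ by $\|\phi(\nu_k) - f\|_p \to 0$ shows that $f$ is the $\preceq$-infimum of $\phi[C]$. When $C$ is finite, almost-norm-maximization forces its terminal element to be terminal in $S$, and $f$ is simply its $\phi$-image.

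For the second half of (1), I would check that for every $\tau \in S$ the projection $\phi(\tau) \cdot \Ind_{\supp f}$ equals either $f$ or $\zerovec$: at depth $|\tau|$ the chain $C$ either contains $\tau$ itself (so $f \preceq \phi(\tau)$ and the restriction is $f$) or contains a distinct node at the same depth, which is therefore incomparable with $\tau$, so that separation gives disjoint supports and the restriction vanishes. Hence the bounded projection $g \mapsto g \cdot \Ind_{\supp f}$ carries the linear span of $\ran(\phi)$ into the one-dimensional span of $f$, and by linear density this forces $L^p(\supp f)$ itself to be one-dimensional, so $\supp f$ is an atom of $\Omega$ (unless $f = \zerovec$). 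Part (2a) then follows by the same depth comparison: distinct chains $C_n \neq C_m$ in the partition contain distinct (hence incomparable) elements at every depth at which both are defined, so those elements have disjoint supports, and by (1) the infima $f_n$, $f_m$ lie in those disjoint supports.

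The main obstacle is part (2b): producing for each atom $A$ of $\Omega$ some $f_n$ with $\supp f_n = A$. I would first apply linear density to $\Ind_A$ to find some $\sigma_0 \in S$ with $A \subseteq \supp \phi(\sigma_0)$; atomicity of $A$ then ensures that at each non-terminal node containing $A$ exactly one child's support contains $A$, producing an $A$-descending chain $\sigma_0 \prec \sigma_1 \prec \cdots$ along which summativity keeps the scalar value $c_A := \phi(\sigma_i)|_A$ constant and nonzero, so that its infimum is $c_A \Ind_A$ with support $A$. The hard part is reconciling this chain with the given partition $\{C_n\}$: the sibling $p$-norms of $\sigma_{i+1}$ satisfy
\[
\sum_{\nu' \neq \sigma_{i+1}} \|\phi(\nu')\|_p^p = \|\phi(\sigma_i)\|_p^p - \|\phi(\sigma_{i+1})\|_p^p \longrightarrow 0,
\]
while $\|\phi(\sigma_{i+1})\|_p^p \geq |c_A|^p \mu(A) > 0$, so for all sufficiently large $i$ every sibling of $\sigma_{i+1}$ has $p$-norm strictly less than $\|\phi(\sigma_{i+1})\|_p^p - 2^{-|\sigma_{i+1}|}$, forcing the almost norm-maximizing child of $\sigma_i$ chosen by the partition to be $\sigma_{i+1}$. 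A tail of the $A$-descending chain therefore lies entirely in a single chain $C_{n_A}$, whose infimum agrees by (1) with that of the tail, namely $c_A \Ind_A$; uniqueness of $n_A$ follows from (2a).
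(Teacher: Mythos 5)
The first thing to note is that the paper itself contains no proof of this theorem: it is imported, with the $\ell^p$ case credited to \cite{McNicholl.2017} and the general $L^p$ case to \cite{Brown.McNicholl.2019}. So the comparison is with those proofs, and your argument reproduces their mechanism faithfully. The telescoping identity $\norm{\phi(\nu_k)}_p^p = \norm{\phi(\nu_{k+1})}_p^p + \norm{\phi(\nu_k)-\phi(\nu_{k+1})}_p^p$ for the Cauchy/limit claim, the multiplication-by-$\Ind_{\supp f}$ projection combined with linear density to force $L^p$ over $\supp f$ to be one-dimensional (hence $\supp f$ an atom or $f = \zerovec$), and---the crux of (2b)---the observation that the sibling masses $\norm{\phi(\sigma_i)}_p^p - \norm{\phi(\sigma_{i+1})}_p^p$ tend to $0$ while $\norm{\phi(\sigma_{i+1})}_p^p \geq |c_A|^p\mu(A) > 0$, so the almost norm-maximizing child selected by the partition must eventually be the child whose support carries $A$, are all the right ideas and the verification goes through.

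Three repairs are needed, all local. First, in your per-$\tau$ claim in (1) and in (2a) you only consider depths at which the relevant chains have nodes; an almost norm-maximizing chain need not start at the root and may be finite, so you must also handle $|\tau|$ below the chain's least element (then $\tau$ is either an ancestor of that element, giving restriction $f$, or incomparable with it, giving $\zerovec$) and $|\tau|$ beyond a finite chain's last element (the last element is a leaf of $S$, and since $S$ is prefix-closed, $\tau$'s ancestor at that depth is a distinct node of the same depth, hence incomparable); the analogous case of two partition chains with disjoint depth ranges is what makes (2a) complete. Second, your interim assertion that the $A$-descending chain has infimum $c_A\Ind_A$ is not justified as stated: without almost norm-maximization, the norm limit $g$ of $\phi(\sigma_i)$ satisfies only $c_A\Ind_A \preceq g$ and may a priori carry extra support. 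Your final step does close this---once a tail of the $\sigma$-chain lies in $C_{n_A}$, part (1) makes $f_{n_A} = g$ either $\zerovec$ or an atom of $\preceq$, and since $c_A\Ind_A$ is a nonzero component of $g$, atomicity forces $g = c_A\Ind_A$ up to null sets---but this inference should be made explicit rather than asserted earlier. Third, the $A$-descending path may terminate at a leaf $\sigma_m$ of $S$; then the partition chain containing $\sigma_m$ ends at $\sigma_m$, its infimum is $\phi(\sigma_m)$, which is nonzero by nonvanishing, and (1) plus the atom-inside-atom observation again gives support exactly $A$. (Also, cosmetically, ``$p$-norm strictly less than $\norm{\phi(\sigma_{i+1})}_p^p - 2^{-|\sigma_{i+1}|}$'' conflates the norm with its $p$-th power.) With these patches your write-up is a correct, self-contained proof along the same lines as the cited sources.
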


\begin{thm}\label{thm:disint.comp}
Suppose $p \geq 1$ is computable and $p \neq 2$.  Then every computable presentation of a nonzero $L^p$-space has a computable disintegration.
\end{thm}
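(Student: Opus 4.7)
The plan is to construct the disintegration $\phi$ stage by stage via recursion on nodes of $\N^{<\N}$, maintaining at each stage all four defining conditions (nonvanishing, separating, summative, linearly dense).

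I would begin by producing a computable vector $v^\ast \in \B$ with full support, to serve as $\phi(\langle\rangle)$. Since the distinguished points $(p_n)$ generate $\B$, their supports cover the underlying measure space up to null sets, so a suitable positive combination of (absolute values of) the $p_n$'s, weighted by $2^{-n}/(1+\|p_n\|_p)$, will be strictly positive everywhere and computable. Some care is required because the signature $\LB$ does not directly give access to the lattice operation $|\cdot|$, so one must approximate the positive parts via rational vectors using the norm identity $\||f|\|_p = \|f\|_p$ and density of rational vectors. With $v^\ast$ in hand, every subsequent vector in the tree will be a component of $v^\ast$, guaranteeing the support of the root is maximal.

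For the recursive step at a nonterminal node $\nu$ with assigned vector $v = \phi(\nu)$, I would build its children by iteratively splitting $v$ into disjointly supported pieces. The key computational primitive is: given $v$ and a rational vector $r$, produce a decomposition $v = u_1 + \cdots + u_k + w$ into pairwise disjointly supported components such that on the support of each $u_i$, $r$ is approximately a fixed scalar multiple of $v$. This primitive is implemented by evaluating $\|v - \lambda r\|_p$ for rational $\lambda$ to locate approximate level sets of $r/v$, together with Lamperti's characterization, valid for $p \neq 2$, that $f$ and $g$ have disjoint supports if and only if $\|f+g\|_p^p = \|f\|_p^p + \|g\|_p^p$. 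By interleaving these refinements across all rational vectors as the tree descends in depth, so that at depth $n$ the leaves approximate the first $n$ rational vectors to within $2^{-n}$, I would secure linear density in the limit.

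The principal obstacle is producing exact, rather than merely approximate, disjointness and summativity throughout the final tree; any finite computation gives only near-disjoint approximations. To overcome this, the actual $\phi$ must be defined as the limit of a preliminary approximate construction. Here I would invoke Theorem \ref{thm:limits.chains}: the $\preceq$-infima along almost norm-maximizing chains are either $\zerovec$ or atoms of the measure space, and these infima form a disjointly supported family whose atomic members exhaust the atoms of $\Omega$. This rigidity is exactly what is needed to certify the separating and summative properties of the limiting object, and the computable modulus of convergence supplied by the almost norm-maximizing structure allows the passage to the limit to be effective, so that the resulting $\phi$ is a genuine, computable disintegration.
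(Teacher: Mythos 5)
You have correctly identified the overall architecture (a stage-wise tree construction with rational labels, exact disjointness and summativity recovered only in the limit), but note that the paper itself does not prove Theorem \ref{thm:disint.comp}: it imports it from \cite{McNicholl.2017} (for $\ell^p$) and \cite{Brown.McNicholl.2019} (for arbitrary $L^p$-spaces), so your argument has to stand on its own --- and it has two genuine gaps. First, the disjointness certificate on which the whole construction rests is false as stated: $\norm{f+g}_p^p = \norm{f}_p^p + \norm{g}_p^p$ does not characterize disjoint support for any $p \geq 1$. At $p = 1$ \emph{every} pair of nonnegative vectors satisfies it, and for $p > 1$, $p \neq 2$, take $f = (1,1)$ and $g = (x,-1)$ in real $\ell^p_2$: since $(1+x)^p - x^p$ increases continuously from $1$ to $\infty$, there is an $x > 0$ with $(1+x)^p = 3 + x^p$, i.e.\ $\norm{f+g}_p^p = \norm{f}_p^p + \norm{g}_p^p$ with fully overlapping supports. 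What Lamperti's theorem actually provides is the four-term identity: $f \cdot g = \zerovec$ if and only if $\norm{f+g}_p^p + \norm{f-g}_p^p = 2\norm{f}_p^p + 2\norm{g}_p^p$. And even with the correct identity, the deeper missing piece is the one you yourself flag: a finite computation certifies only \emph{near}-equality, so to make the limit tree exactly separating and summative one needs a quantitative stability lemma (near-equality in Lamperti's identity forces the pair to lie within an explicit distance of an exactly disjointly supported pair, with a computable modulus). That stability analysis is the technical core of the proofs in \cite{McNicholl.2017} and \cite{Brown.McNicholl.2019}; your proposal supplies no such estimate, and without it nothing makes the stage-wise labels converge at all, let alone to a disintegration.

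Second, your appeal to Theorem \ref{thm:limits.chains} to ``certify the separating and summative properties of the limiting object'' is circular, and the effectivity you attribute to it is refuted by the paper itself. That theorem \emph{presupposes} a disintegration $\phi$ and describes the $\preceq$-infima along almost norm-maximizing chains inside it; it says nothing about an approximate tree that is not yet a disintegration. Moreover those chain limits come with no computable modulus of convergence: the norms $\norm{g_n}_p$ are in general only right-c.e.\ (this is precisely how $A_1$ in Section \ref{sec:Lebesgue::subsec:lp.Lp01} gets its $\Pi_1^0$ bound), and by \cite{McNicholl.Stull.2019} every c.e.\ degree --- in particular a noncomputable one --- is the degree of isometric isomorphism of some computable copy of $\ell^p$. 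If almost norm-maximizing chains supplied a computable modulus, every computable copy would be computably isometrically isomorphic to the standard one, contradicting Theorem \ref{thm:low.4.ii.lp}. Finally, your root step quietly assumes the lattice modulus $f \mapsto |f|$ is computable from a presentation in the signature $\LB$; it is not an operation of that signature, and the identity $\norm{|f|}_p = \norm{f}_p$ only yields the \emph{norm} of $|f|$ --- it gives no way to locate $|f|$ among the rational vectors. In the actual proofs the root, like every other label of the tree, emerges as the limit of stage-wise rational labels controlled by the stability estimates, rather than being computed outright at the start.
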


\begin{thm}\label{thm:comp.partition}
If $\strb^\#$ is a computable presentation of an $L^p$-space, and if $\phi$ is a computable disintegration of $\strb^\#$, then there is a partition $\{C_n\}_{n < \kappa}$ of $\dom(\phi)$ into uniformly c.e. almost norm-maximizing chains (where $\kappa \leq \N$).
\end{thm}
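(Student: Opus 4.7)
The plan is to select at each nonterminal node $\nu \in S := \dom(\phi)$ one distinguished almost norm-maximizing child $a(\nu)$ and then partition $S$ into the chains obtained by iterating $a$ from each \emph{source}---a node not of the form $a(\mu)$ for any nonterminal $\mu$.  Because $\phi$ is separating and summative and the ambient space is $L^p$, we have
\[
\norm{\phi(\nu)}_p^p = \sum_{\nu' \text{ a child of } \nu \text{ in } S} \norm{\phi(\nu')}_p^p,
\]
so $N := \norm{\phi(\nu)}_p^p$ is a computable real that exactly accounts for the total $p$-th power mass distributed among the (possibly infinitely many) children.  This identity is the essential computability hook.

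To produce $a$, we give a c.e.\ certification procedure for ``$\nu_i$ is almost norm-maximizing among the children of $\nu$.''  Enumerate the children $\nu_0, \nu_1, \ldots$ of $\nu$ in $S$ together with convergent approximations to each $\norm{\phi(\nu_i)}_p^p$.  At stage $s$, certify $\nu_i$ (for some $i \leq s$) once the approximations confirm both (i) $\norm{\phi(\nu_j)}_p^p \leq \norm{\phi(\nu_i)}_p^p + 2^{-|\nu|}$ for every $j \leq s$, and (ii) $N - \sum_{j \leq s} \norm{\phi(\nu_j)}_p^p \leq \norm{\phi(\nu_i)}_p^p + 2^{-|\nu|}$; condition (ii) uniformly bounds $\norm{\phi(\nu_j)}_p^p$ for every unseen $j > s$ since each such norm is one nonnegative term of the tail.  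Some sibling $\nu^*$ satisfies $\norm{\phi(\nu^*)}_p^p \geq \sup_{\nu'}\norm{\phi(\nu')}_p^p - 2^{-|\nu|-1}$; by nonvanishing we have $\norm{\phi(\nu^*)}_p^p > 0$, and since the tail $N - \sum_{j \leq s}\norm{\phi(\nu_j)}_p^p$ tends to $0$, $\nu^*$ is eventually certified.  Define $a(\nu)$ to be the first witness enumerated; then $a$ is a total computable function on the nonterminal nodes of $S$.

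Call $\nu \in S$ a source if $\nu$ is the root of $S$ or $\nu \neq a(\text{parent}(\nu))$; this is decidable because every non-root node of $S$ has its parent in $S$ and nonterminal (it has $\nu$ as a child), so $a(\text{parent}(\nu))$ is defined.  For each source $\nu$ set $C_\nu = \{a^k(\nu) : k \geq 0\}$, with the iteration continuing only so long as the previous node is nonterminal.  Each $C_\nu$ is c.e.\ uniformly in an index for $\nu$ and, by construction, is an almost norm-maximizing chain.  Tracing $a$ backward from any $\mu \in S$ strictly decreases depth and lands at a unique source, so the $C_\nu$ partition $S$; enumerating the sources indexes them.  The main obstacle lies in the certification step of the second paragraph: it leans crucially on the $L^p$ additivity of $p$-th powers of disjoint-support norms and on nonvanishing, and would not survive a move to disintegrations in arbitrary Banach spaces where sibling norms need not sum to the parent's norm.
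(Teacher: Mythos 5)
The paper states this theorem without proof, citing \cite{McNicholl.2017} and \cite{Brown.McNicholl.2019}, and your argument is correct and essentially the argument from those sources: greedily certify an almost norm-maximizing child at each nonterminal node, using the disjoint-support additivity $\norm{\phi(\nu)}_p^p = \sum_{\nu'} \norm{\phi(\nu')}_p^p$ to bound all unseen siblings by the computable tail, then obtain the uniformly c.e.\ chains by iterating the selection function from the (effectively recognizable) sources. Two small points to tidy up: the definition measures the tolerance by the length of the \emph{child}, so since every sibling has length $|\nu|+1$ your certification threshold should be $2^{-(|\nu|+1)}$ rather than $2^{-|\nu|}$ --- your termination argument goes through verbatim for any fixed positive tolerance --- and computing the $p$-th powers of the norms requires the standing assumption that $p$ is a computable real, which is implicit in the statement here (and explicit in the cited sources).
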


\subsection{Proof of Theorem \ref{thm:low.4.ii.lpn.Lp01}}\label{sec:Lebesgue::subsec:lpn.Lp01}

It suffices to show the following.

\begin{thm}\label{thm:deg.isom.lpn.Lp01}
Suppose $p \geq 1$ is computable and $p \neq 2$.  
Then the degrees of isometric isomorphism for $\ell_n^p \oplus_p L^p[0,1]$ are precisely the c.e. degrees.
\end{thm}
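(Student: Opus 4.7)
The plan is to prove Theorem \ref{thm:deg.isom.lpn.Lp01} in two directions. For the reverse inclusion --- every degree of isometric isomorphism is c.e.\ --- let $\strb^\#$ and $\strb^+$ be computable presentations of $\ell_n^p \oplus_p L^p[0,1]$. Apply Theorem \ref{thm:disint.comp} to obtain computable disintegrations $\phi$ of $\strb^\#$ and $\phi'$ of $\strb^+$, and Theorem \ref{thm:comp.partition} to partition $\dom(\phi)$ and $\dom(\phi')$ into uniformly c.e.\ almost norm-maximizing chains. By Theorem \ref{thm:limits.chains} the $\preceq$-infimum along each such chain is either $\zerovec$ or an atom, and these infima are in bijection with the atoms of the underlying measure space. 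By Theorem \ref{thm:classification}, $\ell_n^p \oplus_p L^p[0,1]$ has exactly $n$ atoms, so exactly $n$ chains in each partition are atomic. Because $n$ is finite and the norms along each chain form a computable, monotonically non-increasing sequence whose possible positive limits are drawn from a computable finite target set (the multi-set of atom norms), we can enumerate the atomic chains in a c.e.\ fashion using a dovetailed settling procedure: once the norms along $n$ distinct chains have all been observed to sit above a positive threshold persistently enough, those chains are declared atomic. Once the atomic chains in each presentation have been enumerated, matching the $n$ atoms of $\strb^\#$ to those of $\strb^+$ in a norm-preserving way is a finite combinatorial step, and the isometry on the residual $L^p[0,1]$-components can be assembled using the disintegration-based machinery of \cite{Brown.McNicholl.2019}. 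The combined isometric isomorphism is computable from this c.e.\ information, so its degree is c.e.

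For the forward inclusion --- every c.e.\ degree is realized --- fix a c.e.\ set $A$ of degree $\d$ with computable enumeration $(A_s)_{s \in \N}$. The plan is to construct a computable presentation $\strb_A^+$ of $\ell_n^p \oplus_p L^p[0,1]$ such that $\d$ is the least degree computing an isometric isomorphism of the standard presentation onto $\strb_A^+$. Adapting the coding idea used in the proof of Theorem \ref{thm:low.4.ii.lp} for $\ell^p$ from \cite{McNicholl.Stull.2019}, we take the distinguished points of $\strb_A^+$ to be finite rational combinations of the atoms of $\ell_n^p$ and indicator functions of dyadic subintervals of $[0,1]$, perturbed stage by stage with small, norm-summable coding vectors supported on disjoint dyadic subintervals. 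When $k$ enters $A_{s+1} \setminus A_s$, the $k$-th coding slot is activated, altering the corresponding distinguished point of $\strb_A^+$ in a way recoverable only given $A$. The isometric isomorphism to the standard presentation must separate each distinguished point of $\strb_A^+$ into its atomic and nonatomic parts to arbitrary precision, and the coding is arranged so that carrying out this separation is Turing equivalent to enumerating $A$.

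The main technical obstacle is the forward construction. We must simultaneously ensure that (i) each distinguished point of $\strb_A^+$ is a genuinely computable vector, which forces the perturbations to be norm-summable and their supports computable; (ii) the coding vectors live in regions of $[0,1]$ that cannot be located from the presentation alone without access to $A$; and (iii) the information extracted by the isometry is \emph{precisely} $A$, so that the degree of isomorphism is exactly $\d$ and not something strictly greater. Balancing these constraints --- in particular managing the interplay between the finite $\ell_n^p$ summand and the infinite-dimensional $L^p[0,1]$ summand, which is the principal new feature beyond the purely atomic $\ell^p$ construction --- is the heart of the argument.
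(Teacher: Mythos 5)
There are genuine gaps in both directions. In the reverse direction, your central claim---that the positive limits of the chain norms ``are drawn from a computable finite target set (the multi-set of atom norms)''---is false, and it hides exactly the phenomenon the theorem is about. For an arbitrary computable presentation $\strb^\#$, the infima $g_{j_1}, \ldots, g_{j_n}$ along the atomic chains are atoms of $\preceq$, but their norms $\norm{g_{j_s}}_p$ are only \emph{right-c.e.} reals: they are decreasing limits of the computable quantities $\norm{\phi(\nu)}_p$ and can be noncomputable. (Only the images $T'(e_k)$ of the standard basis atoms under an isometric isomorphism are forced to have norm $1$; each $g_{j_s}$ is merely a scalar multiple of such an image, with the scalar determined by the disintegration, not by the space.) Consequently your dovetailed settling procedure cannot work: norms along a chain converge from \emph{above}, so no finite stage certifies that the limit is positive---in general $\{j : g_j \neq \zerovec\}$ is only $\Sigma^0_2$, which is precisely why Theorem \ref{thm:low.4.ii.lp.Lp01} gives a different answer for $\ell^p \oplus_p L^p[0,1]$. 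Indeed, if atomic chains with computable norms could be enumerated as you propose, every computable copy of $\ell^p_n \oplus_p L^p[0,1]$ would be computably isometrically isomorphic to the standard copy, contradicting the realization half of the very theorem you are proving. The paper instead fixes the finitely many atomic indices \emph{nonuniformly} (legitimate since $n$ is finite) and defines $\d$ as the degree of the join of the right Dedekind cuts of $\norm{g_{j_1}}_p, \ldots, \norm{g_{j_n}}_p$, which is c.e.; the identity $\norm{\phi(\nu) - g_{j_s}}_p^p = \norm{\phi(\nu)}_p^p - \norm{g_{j_s}}_p^p$ then makes the atoms, and hence the projected copy of $L^p[0,1]$, $\d$-computable, which is what drives the upper bound.

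Second, even granting an upper bound, you never address the other half of what Definition \ref{def:deg.ii} demands: the degree of isometric isomorphism is the \emph{least} degree computing such an isomorphism, so you must show that \emph{every} degree $\d_1$ computing an isometric isomorphism computes your c.e.\ data back. The paper does this via Lamperti's theorem: any isometric isomorphism preserves disjointness of support and $\preceq$, hence carries each $e_k$ to a scalar multiple of some $g_{j_s}$, so $\d_1$ computes each $\norm{g_{j_s}}_p$ and therefore $\d$. Your conclusion ``the combined isometric isomorphism is computable from this c.e.\ information, so its degree is c.e.''\ establishes at best that \emph{some} isomorphism is computable in a c.e.\ degree, not that a least degree exists or is c.e. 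For the forward direction, the paper simply cites \cite{Brown.McNicholl.2019}; your sketch is not a proof (you explicitly defer ``the heart of the argument''), and its guiding mechanism---hiding coding vectors in ``regions of $[0,1]$ that cannot be located'' without $A$---is the wrong one: the nonatomic summand admits a large group of computable isometries induced by measure-preserving transformations, so positional information in $L^p[0,1]$ cannot be forced to be recovered by an isometric isomorphism. The known coding exploits the rigidity of the \emph{atomic} part, e.g., making atom norms right-c.e.\ reals whose right cuts code $A$.
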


Suppose $p \geq 1$ is computable and $p \neq 2$.  One direction of Theorem \ref{thm:deg.isom.lpn.Lp01} is proven in \cite{Brown.McNicholl.2019}; namely, 
every c.e. degree is a degree of isometric isomorphism for $\ell_n^p \oplus_p L^p[0,1]$.
Thus, we need only show that every degree of isometric isomorphism for $\ell^p_n \oplus_p L^p[0,1]$ is 
c.e..

 Let $\strb = \ell^p_n \oplus_p L^p[0,1]$.  Let $P$ denote the projection of $\strb$ onto its embedded copy of $L^p[0,1]$; i.e., $P(u,v) = (0,v)$.  
Finally, let $I$ denote the identity map on $\strb$.

Suppose $\strb^\#$ is a computable presentation of $\strb$.  By Theorem \ref{thm:disint.comp}, there is a computable disintegration $\phi$ of $\strb^\#$; let $S = \dom(\phi)$.  By Theorem \ref{thm:comp.partition}, there is a partition 
$\{C_j\}_{j = 0}^\infty$ of $S$ into uniformly c.e.\ almost norm-maximizing chains.  
Let $g_j = \lim_{\nu \in C_j} \phi(\nu)$.  By Theorem \ref{thm:limits.chains}, there are exactly $n$ values of $j$ so that $g_j \neq \zerovec$; let $j_1, \ldots, j_n$ denote these values.  Again, by Theorem \ref{thm:limits.chains}, each $g_{j_s}$ is an atom of $\preceq$.  So $P(g_{j_s}) = \zerovec$, and for each $k \in \{1, \ldots, n\}$ there is exactly one $s$ so that $\{k\} = \supp((I - P)(g_{j_s}))$; without loss of generality, assume $s = k$.  

Let $\mathbf{d}$ be the Turing degree of the join of the right Dedekind cuts of $\norm{g_{j_1}}_p$, $\ldots$, $\norm{g_{j_n}}_p$.  
Thus, $\mathbf{d}$ is c.e..  We show that $\mathbf{d}$ is the degree of isometric isomorphism of 
$\ell^p_n \oplus_p L^p[0,1]$.  

We first claim that $g_{j_s}$ is a $\mathbf{d}$-computable vector of $\strb^\#$.  
We see that $\norm{g_{j_s}}_p$ is a $\mathbf{d}$-computable real.  
If $\nu \in C_{j_s}$, then by Theorem \ref{thm:limits.chains}.\ref{thm:limits.chains::itm:inf}, $g_{j_s} \preceq \phi(\nu)$, and so 
$\norm{\phi(\nu) - g_{j_s}}_p^p = \norm{\phi(\nu)}_p^p - \norm{g_{j_s}}_p^p$.  
So for each $\nu \in C_{j_s}$, $\norm{\phi(\nu) - g_{j_s}}_p$ is $\mathbf{d}$-computable uniformly in $\nu$.  Again, by Theorem \ref{thm:limits.chains}.\ref{thm:limits.chains::itm:inf}, for each $k \in \N$, there is a $\nu \in C_{j_s}$ such that 
$\norm{\phi(\nu) - g_{j_s}}_p < 2^{-k}$.  The $\mathbf{d}$-computability of $g_{j_s}$ as a vector of $\strb^\#$ now follows.

For each $\nu \in S$, $g_{j_s} \preceq \phi(\nu)$ if and only if $\nu \in C_{j_s}$.  Thus, for each $\nu \in S$, 
\[
P(\phi(\nu)) = \phi(\nu) - \sum_{\nu \in C_{j_s}} g_{j_s}.
\]
Let $\{\nu_t\}_{t \in \N}$ be an effective enumeration of $S$, and let $L^p[0,1]^\#$ be the presentation of $L^p[0,1]$ whose $t$-th distinguished vector is $P(\phi(\nu_t))$.  This presentation is $\mathbf{d}$-computable.  So, by the relativization of Theorem \ref{thm:disint.comp}, there is a $\mathbf{d}$-computable isometric isomorphism $T_1$ of $L^p[0,1]$ onto $L^p[0,1]^\#$.  Let $T: \strb \rightarrow \strb^\#$ be defined by
\[
T\left(\sum_{k =0}^{n-1} \alpha_k e_k, f\right) = \sum_{k = 0}^{n-1} \frac{\alpha_k}{\norm{g_{j_k}}_p} g_{j_k} + T_1(f).
\]
Thus, $T$ is a $\mathbf{d}$-computable isometric isomorphism of $\strb$ onto $\strb^\#$.  

Now suppose $\mathbf{d}_1$ computes an isometric isomorphism $T'$ of $\strb$ onto $\strb^\#$.  
As noted above, $T'$ preserves disjointness of support and $\preceq$.  Thus, for each $k$, there is an $s_k$ so that 
$T'(e_k, \zerovec) = j_{s_k}$.  Then $\mathbf{d}_1$ computes $\norm{g_{j_s}}_p$ from $s$, and so 
$\mathbf{d}_1 \geq_T \mathbf{d}$.  Thus, $\mathbf{d}$ is the degree of isometric isomorphism of $\ell^p_n \oplus_p L^p[0,1]$.

\subsection{Proof of Theorem \ref{thm:low.4.ii.lp.Lp01}}\label{sec:Lebesgue::subsec:lp.Lp01}

It suffices to prove the following.  

\begin{thm}\label{thm:deg.isom.lpLp01}
Suppose $p \geq 1$ is computable.  Then every computable presentation of 
$\ell^p \oplus_p L^p[0,1]$ has a $\Sigma_2^0$ degree of isometric isomorphism.  If $p \neq 2$, then 
every $\Sigma_2^0$ degree is the degree of isometric isomorphism of a computable presentation 
of $\ell^p \oplus_p L^p[0,1]$.
\end{thm}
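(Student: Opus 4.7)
The plan is to follow the template of the proof of Theorem \ref{thm:deg.isom.lpn.Lp01}, accommodating the fact that $\ell^p \oplus_p L^p[0,1]$ has countably many atoms rather than finitely many.  For the upper bound, given a computable presentation $\strb^\#$ of $\strb := \ell^p \oplus_p L^p[0,1]$, I would apply Theorem \ref{thm:disint.comp} to obtain a computable disintegration $\phi$ and Theorem \ref{thm:comp.partition} to get a uniformly c.e.\ partition $\{C_j\}_{j \in \N}$ of $\dom(\phi)$ into almost norm-maximizing chains.  Set $g_j = \lim_{\nu \in C_j} \phi(\nu)$.  Since $\phi$ is summative, $\norm{\phi(\nu)}_p$ is non-increasing along each chain, so
\[
j \in J := \{j : g_j \neq \zerovec\} \ \Longleftrightarrow \ \exists k \, \forall \nu \in C_j \, \norm{\phi(\nu)}_p \geq 2^{-k},
\]
a $\Sigma_2^0$ condition.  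For $j \in J$, $\norm{g_j}_p = \inf_{\nu \in C_j} \norm{\phi(\nu)}_p$ has a right Dedekind cut that is c.e.\ uniformly in $j$.  Let $\mathbf{d}$ be the Turing degree of the join of an $\emptyset'$-enumeration of $J$ with these uniformly enumerable right cuts; then $\mathbf{d}$ is $\Sigma_2^0$.

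Adapting the argument from Section \ref{sec:Lebesgue::subsec:lpn.Lp01}, each $g_j$ (for $j \in J$) is $\mathbf{d}$-computable in $\strb^\#$, and projecting the distinguished vectors $\phi(\nu_t)$ onto the $L^p[0,1]$-component yields a $\mathbf{d}$-computable presentation of $L^p[0,1]$.  The relativization of Theorem \ref{thm:disint.comp} then provides a $\mathbf{d}$-computable isometric isomorphism of $L^p[0,1]$ onto this presentation, and combining these pieces produces a $\mathbf{d}$-computable isometric isomorphism $T : \strb \rightarrow \strb^\#$.  Conversely, suppose $\mathbf{d}_1$ computes an isometric isomorphism $T'$.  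By Lamperti's theorem, $T'$ preserves $\preceq$-atoms, so each $T'(e_k, \zerovec)$ lies (as a normalized atom) in a unique chain $C_{j_{s_k}}$, and the map $k \mapsto j_{s_k}$ is $\mathbf{d}_1$-computable, yielding an enumeration of $J$.  For each such $k$, $\norm{\phi(\nu)}_p$ decreases to $\norm{g_{j_{s_k}}}_p$ as $\nu$ ranges through $C_{j_{s_k}}$, so $\mathbf{d}_1$ enumerates the right cut of $\norm{g_{j_{s_k}}}_p$ uniformly.  Thus $\mathbf{d}_1 \geq_T \mathbf{d}$, so $\mathbf{d}$ is the degree of isometric isomorphism of $\strb^\#$.

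For the second assertion, I would assume $p \neq 2$ and let $\mathbf{d}$ be a $\Sigma_2^0$ degree, witnessed by a $\Sigma_2^0$ set $A \in \mathbf{d}$.  Extending the c.e.\ coding of \cite{Brown.McNicholl.2019} for $\ell^p_n \oplus_p L^p[0,1]$ to countably many atoms, one codes $A$ into a sequence $(r_s)_{s \in \N}$ of target atomic norms whose right Dedekind cuts are uniformly c.e.\ in $\emptyset'$, arranged so that the join of these cuts has Turing degree exactly $\deg_T(A)$.  One then constructs a computable presentation of $\strb$ whose $s$-th atomic chain has limiting norm $r_s$ and whose nonatomic part is realized as in the standard presentation of $L^p[0,1]$.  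By the upper bound just proved, the degree of isometric isomorphism of the resulting presentation equals $\mathbf{d}$.

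The main obstacle is the lower-bound portion of Part 1: recovering the exact values $\norm{g_{j_{s_k}}}_p$ from the given isomorphism $T'$.  Since $T'(e_k, \zerovec)$ is a unit vector multiplied by an unknown unimodular scalar, its own norm conveys no information about $\norm{g_{j_{s_k}}}_p$; the number must be extracted indirectly through identification of the chain $C_{j_{s_k}}$ and the Lamperti-preserved $\preceq$-structure, using the approximations $\norm{\phi(\nu)}_p \downarrow \norm{g_{j_{s_k}}}_p$ along that chain.  A secondary obstacle arises in Part 2: orchestrating the coding of $A$ across infinitely many atoms so that the norm-data Turing degree is exactly $\mathbf{d}$---neither strictly simpler nor strictly more complex---while keeping the presentation computable.
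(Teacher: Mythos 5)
There is a genuine gap, concentrated in Part 1, and it is exactly the point where the infinite-atom case stops being a routine adaptation of Theorem \ref{thm:deg.isom.lpn.Lp01}. Your proposed invariant---the join of an $\emptyset'$-enumeration of $J$ with the (uniformly c.e.) right Dedekind cuts of the $\norm{g_j}_p$---omits the tail-sum data that the paper encodes as the $\Pi_2^0$ set $A_2 = \{\langle \nu, M, k\rangle : \norm{\sum_{n \geq M} \chi_{C_n}(\nu)\, g_n}_p \leq 2^{-k}\}$, and this omission breaks your upper bound. In the finite-atom case, $P(\phi(\nu))$ is $\phi(\nu)$ minus a \emph{finite} sum of $\mathbf{d}$-computable atoms, so the induced presentation of $L^p[0,1]$ is $\mathbf{d}$-computable; here $P(\phi(\nu))$ subtracts infinitely many atomic components, and to approximate it within $2^{-k}$ you must produce an $M$ past which the remaining atomic mass in $\phi(\nu)$ has norm at most $2^{-k}$. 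Knowing each $\norm{g_n}_p$ exactly, plus an enumeration of $J$, gives only left-c.e.\ (relative to your oracle) approximations to that atomic mass, with no modulus; so your step ``projecting the distinguished vectors yields a $\mathbf{d}$-computable presentation of $L^p[0,1]$'' fails. The paper's degree is $\mathbf{a} = \deg(A_1 \oplus A_2)$ with $A_1 = \{\langle n,k\rangle : \norm{g_n}_p \geq 2^{-k}\}$ ($\Pi_1^0$) handling the atomic part and $A_2$ handling the projection; it is the $\Pi_2^0$ set $A_2$ that makes the bound $\Sigma_2^0$ rather than $\Delta_2^0$-of-c.e.-flavor, which is what your invariant would give.

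Your lower bound also does not close. The degree $\mathbf{d}$ you define is not a canonical invariant (it depends on which $\emptyset'$-enumeration of $J$ you fix), and what you actually verify about $\mathbf{d}_1$ is that it \emph{enumerates} $J$ and enumerates the right cuts---but the right cuts are already c.e.\ with no oracle, so that clause is vacuous, and ``$\mathbf{d}_1$ enumerates $J$'' does not imply $\mathbf{d}_1$ computes your chosen $\emptyset'$-enumeration (nor $J$ itself). Indeed for the standard presentation the degree of isometric isomorphism is $\mathbf{0}$ while your $\mathbf{d}$ could be $\mathbf{0}'$. The paper avoids this mismatch by making the invariant a specific set that any isomorphism-computing degree $\mathbf{b}$ computes \emph{outright}: after refining the chains so that each child is strongly norm-maximizing, the argument of Lemma 6.2 of \cite{McNicholl.Stull.2019} shows $\{\norm{g_n}_p\}_n$ is a $\mathbf{b}$-computable sequence of reals (two-sided approximations, strictly more than right cuts), whence $\mathbf{b}$ computes $A_1$; and since $\mathbf{b}$ computes $P = T P T^{-1}$ and $P(\phi(\nu)) = \phi(\nu) - \sum_n \chi_{C_n}(\nu) g_n$, it computes $A_2$. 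Two smaller points: you do not address $p = 2$ in Part 1 (the paper reduces to $p \neq 2$ without loss of generality, since $\ell^2 \oplus_2 L^2[0,1]$ is a separable Hilbert space), and for Part 2 the paper itself gives no construction---as with the c.e.\ case of Theorem \ref{thm:deg.isom.lpn.Lp01}, the realization of every $\Sigma_2^0$ degree rests on \cite{Brown.McNicholl.2019}---so your sketch there is not the problem; but note that any such coding must target the $A_2$-type tail data, not merely right cuts of atomic norms, since the join of those cuts is c.e.\ and so could never realize an arbitrary $\Sigma_2^0$ degree on its own.
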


Without loss of generality, assume $p \neq 2$.  Let $\strb = \ell^p \oplus_p L^p[0,1]$, and let 
$P$ denote the projection of $\strb$ onto its embedded copy of $L^p[0,1]$.  Suppose 
$\strb^\#$ is a computable presentation of $\strb$.  Let $\phi$ be a disintegration of 
$\strb^\#$, and let $\{C_n\}_{n = 0}^\infty$ be a partition of $S = \dom(\phi)$ into uniformly c.e. almost norm-maximizing chains.  Set $g_n = \lim_{\nu \in C_n} \phi(\nu)$.  

We now define two sets:
\begin{eqnarray*}
A_1 & = & \{\langle n, k \rangle\ :\ \norm{g_n}_p \geq 2^{-k}\}\\
A_2 & = & \left\{ \langle \nu, M, k \rangle\ :\ \Big\|\sum_{n \geq M} \chi_{C_n}(\nu) g_n\Big\|_p \leq 2^{-k}\right\}
\end{eqnarray*}
This allows us to define $A  =  A_1 \oplus A_2$ and $\mathbf{a} = \deg(A)$.

We first claim that $\mathbf{a}$ is the degree of isometric isomorphism of $\strb^\#$.  
To this end, we first show that $\mathbf{a}$ computes an isometric isomorphism of $\strb$ onto $\strb^\#$.  
We begin by noting that $A_1$ computes an enumeration of all $n\in \N$ so that $g_n \neq \zerovec$.  
It follows from Theorem \ref{thm:limits.chains} that $A_1$ computes a linear isometric map $T_1$ of 
$\ell^p$ into $\strb^\#$ so that $\ran(T_1) = (I - P)[\strb]$ (where $I$ denotes the identity map).  It follows as in the proof of Theorem \ref{thm:deg.isom.lpn.Lp01} that $A_2$ computes a linear isometric map $T_2$ of $L^p[0,1]$ onto $P[\strb]$.  Thus, 
$\mathbf{a}$ computes an isometric isomorphism of $\strb$ onto $\strb^\#$.  

Now, suppose $\mathbf{b}$ computes an isometric isomorphism of $\strb$ onto $\strb^\#$.  
We can assume that for each nonterminal node $\nu$ of $S$, if $\nu \in C_n$, then 
$C_n$ contains a child $\nu'$ of $\nu$ so that $\norm{\phi(\mu)}_p^p < \norm{\phi(\nu')}_p^p + \frac{1}{2} \norm{\phi(\nu)}_p^p$ for every child $\mu$ of $\nu$ in $S$.  It then follows, by the same argument in the proof of Lemma 6.2 of \cite{McNicholl.Stull.2019}, that $\{\norm{g_n}_p\}_{n = 0}^\infty$ is a 
$\mathbf{b}$-computable sequence of reals.  Thus, $\mathbf{b}$ computes an enumeration of all $n \in \N$ so that $g_n \neq \zerovec$.   From this, we conclude that $\mathbf{b}$ computes $A_1$.  
  Since $\mathbf{b}$ computes $T^{-1}$, $\mathbf{b}$
also computes $P = T P T^{-1}$.  Since 
\[
P(\phi(\nu)) = \phi(\nu) - \sum_n \chi_{C_n}(\nu) g_n,
\]
it follows that $\mathbf{b}$ computes $A_2$ as well.  Hence, $\mathbf{a}$ is the degree of isometric isomorphism of $\strb^\#$.  

We now show that $\mathbf{a}$ is $\Sigma_2^0$.  We first note that 
\[
\langle n,k \rangle\ \in A_1 \Leftrightarrow\ \forall q \in \Q [q > \norm{g_n}_p\ \Rightarrow\ 2^{-k} \leq q].
\]
Since $\norm{g_n}_p$ is right-c.e. uniformly in $n$, $A_1$ is $\Pi_1^0$.  
Since $\phi$ is separating, $g_0$, $g_1$, $\ldots$ are disjointly supported.  
Thus, 
\begin{eqnarray*}
\Big\|\sum_{n \geq M} \chi_{C_n}(\nu) g_n \Big\|_p \leq 2^{-k} & \Leftrightarrow & \sum_{n \geq M} \norm{g_n}_p^p \chi_{C_n}(\nu) \leq 2^{-kp}\\
& \Leftrightarrow & \forall k' \forall M' \geq M \exists q_M, \ldots, q_{M'} \in \Q [ \forall M \leq j \leq M'\ q_j > \norm{g_n}_p^p\\
& & \wedge\ \sum_{n = M}^{M'} q_n \chi_{C_n}(\nu) < 2^{-kp} + 2^{-k'}].
\end{eqnarray*}
Therefore, $A_2$ is $\Pi_2^0$ and so $\mathbf{a}$ is $\Sigma_2^0$.  

\section{Conclusion}\label{sec:conclusion}

Our goal in this paper has been to extend the investigation of lowness for isomorphism to lowness for isometric isomorphism of 
metric structures and for particular classes of metric structures.  We have produced a framework for this extension that 
naturally extends the framework for countable algebraic structures, and we have 
obtained several initial results in this new direction.  In particular, we have identified the degrees that are low for isometry as precisely the degrees that are low for isomorphism. While these degrees have no known full characterization themselves, this is one of the most robust lowness notions that has been studied: the degrees that are low for isomorphism are precisely those that are low for paths and, now, those that are low for isometry.

Our conclusions so far suggest a number of questions.  
To begin, our result on Banach spaces (Theorem \ref{thm:low.ii.Banach}) leads to the following.

\begin{question}\label{ques:low.ii.Banach}
Which Turing degrees are low for isometric isomorphism of Banach spaces?
\end{question}

On one hand, it seems reasonable to conjecture that every degree that is low for isometric isomorphism of Banach spaces is 
low for isomorphism.  
On the other hand, one possible way to differentiate the two might be as follows. It is known that the degrees that are low for isomorphism have measure zero; in fact, no Martin-L\"{o}f random degree is low for isomorphism. If the degrees that are low for isometric isomorphism were found to have measure greater than zero, Question \ref{ques:low.ii.Banach} would be answered immediately.  Thus, we are led to the following.

\begin{question}
Do the degrees that are low for isometric isomorphism of Banach spaces have measure zero?
\end{question}

For $p \geq 1$ computable, we have characterized the degrees that are low for isometric isomorphism of the standard copies of the separable $L^p$-spaces.  However, we have not determined the degrees that are low for isometric isomorphism of the $L^p$-spaces in general or of specific types of $L^p$-spaces rather than only for their standard presentations.  These considerations suggest the following. 

\begin{question}\label{ques:low.ii.Lp}
Suppose $p$ is a computable real so that $p \geq 1$ and $p \neq 2$. 
\begin{enumerate} 
	\item Which Turing degrees are low for isometric isomorphism of $L^p$-spaces?  
	
	\item Which Turing degrees are low for isometric isomorphism of $\ell^p$-spaces (i.e., spaces that are isometrically isomorphic to $\ell^p$)?
	
	\item Which Turing degrees are low for isometric isomorphism of $\ell^p_n \oplus_n L^p[0,1]$-spaces?  
	
	\item Which Turing degrees are low for isometric isomorphism of $\ell^p \oplus_n L^p[0,1]$-spaces?  
\end{enumerate}
\end{question}

Our motivation for investigating $L^p$-spaces is their importance in the study of Banach spaces. In particular, it is well-known that 
every separable Banach space is isometrically isomorphic to a quotient of $\ell^1$.   
Also important are the $C(X)$-spaces since, for example, every separable Banach space isometrically embeds into $C[0,1]$.  
This suggests another line of inquiry. 

\begin{question}\label{ques:low.ii.C01}
Which Turing degrees are low for isometric isomorphism of $C[0,1]$?
\end{question}

We conclude with a question about degrees of isomorphism.  We recall that a Turing degree $\mathbf{d}$ is a \emph{degree of categoricity} if there is a computable structure $\mathcal{A}$ that is $\mathbf{c}$-computably categorical if and only if $\mathbf{c}\geq_T \mathbf{d}$ \cite{fkm10}. Furthermore, a degree of categoricity $\mathbf{d}$ is \emph{strong} if there is a computable structure $\mathcal{A}$ with computable copies $\mathcal{A}_1$ and $\mathcal{A}_2$ such that not only does $\mathcal{A}$ have degree of categoricity $\mathbf{d}$, every isomorphism from $\mathcal{A}_1$ to $\mathcal{A}_2$ computes $\mathbf{d}$.

This class of degrees does not have a full characterization, either. However, it is known that all such degrees are hyperarithmetic \cite{cfs13} and that every degree $\mathbf{d}$ that is both c.e.\ or d.c.e.\ in $\mathbf{0}^{(m)}$ and Turing above $\mathbf{0}^{(m)}$ is a strong degree of categoricity \cite{fkm10}; in fact, this is true for $\mathbf{0}^{(\alpha)}$ for any successor ordinal $\alpha$ \cite{cfs13}.

\begin{question}
Is every degree of isomorphism a degree of categoricity? If so, is every degree of isomorphism a strong degree of categoricity?
\end{question}

\end{document}